\title[Separation of $\ell_{1}$ spreading models and the Lebesgue property]{On the complete separation of unique $\ell_{1}$ spreading models and the Lebesgue property of Banach spaces} 
\author{Harrison Gaebler} 
\author{Pavlos Motakis}
\author{B\"unyamin Sar\i}
\address{Department of Mathematics, University of North Texas, 1155 Union Circle 311430
Denton, Texas 76203-5017}
\address{Department of Mathematics and Statistics, York University, 4700 Keele Street, Toronto, Ontario, M3J 1P3, Canada}
\email{harrison.gaebler@unt.edu, pmotakis@yorku.ca, bunyamin.sari@unt.edu}
\thanks{The second author was supported by NSERC Grant RGPIN-2021-03639.}
\subjclass[2020]{46B06, 46B20, 46B25, 46G12}
\newcommand{\N}{\mathbb{N}}
\newcommand{\bes}{\begin{equation*}}
\newcommand{\ees}{\end{equation*}}
\newcommand{\ep}{\varepsilon}
\newcommand{\supp}{\text{supp}}
\theoremstyle{definition}
\newtheorem{question}{Question}[section]
\newtheorem{definition}[question]{Definition}
\theoremstyle{plain}
\newtheorem{theorem}[question]{Theorem}
\newtheorem{lemma}[question]{Lemma}
\newtheorem{proposition}[question]{Proposition}
\newtheorem{corollary}[question]{Corollary}
\newtheorem{remark}[question]{Remark}
\begin{document}

\maketitle

\begin{abstract}
We construct a reflexive Banach space $X_\mathcal{D}$ with an unconditional basis such that all spreading models admitted by normalized block sequences in $X_\mathcal{D}$ are uniformly equivalent to the unit vector basis of $\ell_1$, yet every infinite-dimensional closed subspace of $X_\mathcal{D}$ fails the Lebesgue property. This is a new result in a program initiated by Odell in 2002 concerning the strong separation of asymptotic properties in Banach spaces.
\end{abstract}

\section{Introduction}{\label{sec1}}

In Banach space theory, an asymptotic notion refers to a concept describing the behavior of the norm of linear combinations of vectors sampled in a specified asymptotic fashion from a certain type of structure. For example, the seminal notion of a spreading model of a bounded sequence $(x_n)_{n=1}^\infty$ is defined as the unit vector basis $(e_i)_{i=1}^\infty$ of $c_{00}$ with the seminorm given by
\[\Big|\sum_{i=1}^na_i e_i\Big| = \lim_{k_1\to\infty}\lim_{k_{2}\to\infty}\cdots\lim_{k_n\to\infty}\Big\|\sum_{i=1}^na_ix_{k_i}\Big\|,\]
provided that this iterated limit exists for all choices of coefficients $a_1,\ldots,a_n$. Here, the structure in question is a bounded sequence of vectors in a Banach space, and vectors are sampled over a sparse set of sufficiently large indices. Using Ramsey's Theorem, Brunel and Sucheston proved in \cite{brunel:sucheston:1974} that every bounded sequence in a Banach space has a subsequence generating some spreading model. Similar notions about different types of structures have been introduced and studied over the years. One such example is the notion of asymptotic models, introduced by Odell and Halbeisen in \cite{halbeisen:odell:2004}, concerning the norm of linear combinations of vectors sampled from an array of sequences. A different type of such a notion is asymptotic spaces, introduced by Maurey, Milman, and Tomczak-Jaegerman in \cite{maurey:milman:tomczak-jaegermann:1995}, concerning the norm of linear combinations of vectors picked in a finite-round two-player game between a vector chooser and a finite-codimensional subspace chooser. The different types of asymptotic notions are relevant to the study of many subjects in Banach space theory, such as the distortion problem in \cite{milman:tomczak-jaegermann:1993} using asymptotic spaces, the invariant subspace problem in reflexive Banach spaces in \cite{argyros:motakis:2014} using spreading models, the uniform approximation properties of bounded linear operators in \cite{argyros:georgiou:lagos:motakis:2020} using joint spreading models, the coarse geometry of Banach spaces in \cite{baudier:lancien:motakis:schlumprecht:2021} using asymptotic models, unique maximal ideals in the operators of stopping time-type Banach spaces in \cite{kania:lechner:2022} using spreading models, and quotients algebras of operator spaces in \cite{motakis:pelczar-barwacz:2024} using asymptotic versions. The wide applicability of these tools has led to an independent study of their uniqueness properties and a desire to better understand their interconnections. Among the most outspoken promoters of this program was Odell, who in this context posed several questions of the following type (see \cite{odell:2002}, \cite{odell:2007}, and \cite{junge:kutzarova:odell:2006}): Let $P$, $Q$ be properties of Banach spaces that are stable under passing to closed subspaces and assume that $P$ implies $Q$. If a Banach space $X$ satisfies Q must it have an infinite-dimensional closed subspace satisfying $P$? For such $P$ and $Q$, if there exists a Banach space $X$ satisfying $Q$ and all infinite-dimensional subspaces of which fail $P$ then we say that $P$ is completely separated from $Q$. For a detailed discussion on positive and negative results in this context, we refer the reader to \cite{argyros:motakis:2020}. We prove such a complete separation theorem for $P$ the Lebesgue property and $Q$    the property of admitting a unique $\ell_1$ spreading model.

Let $X$ be a Banach space. A function $f:[0,1]\to X$ is Riemann-integrable if it is bounded and continuous up to a set of Lebesgue measure zero (see, e.g., \cite[Theorem 18]{gordon:1991} or \cite[Theorem. 2.1.3.]{gaebler:2021}). However, the converse statement is false in general. Indeed, it is easy to check that the function which maps the rationals in $[0,1]$ to the unit vector basis of $c_{0}$ is Riemann-integrable. A Banach space $X$ is therefore said to have the Lebesgue property if every Riemann-integrable function $f:[0,1]\to X$ is continuous up to a set of Lebesgue measure zero. The first and third named authors recently in \cite{gaebler:sari:2023} characterized the Lebesgue property in terms of a new asymptotic notion. In particular, a Banach space $X$ has the Lebesgue property if and only if every normalized basic sequence in $X$ is Haar-$\ell_{1}^{+}$ (see \cite{gaebler:sari:2023} or Section \ref{sec2} of this paper for the precise definition of Haar-$\ell_{1}^{+}$). It is also proved in \cite{gaebler:sari:2023} that for the properties:
\begin{enumerate}[label=(P\arabic*)]
\item\label{intro unique ell1 am} every asymptotic model of $X$ is equivalent to the unit vector basis of $\ell_{1}$ 
\item\label{intro lebesgue property} $X$ satisfies the Lebesgue property
\item\label{intro unique ell1 sm} every spreading model of $X$ is equivalent to the unit vector basis of $\ell_{1}$
\end{enumerate}
the implications \ref{intro unique ell1 am}$\implies$\ref{intro lebesgue property}$\implies$\ref{intro unique ell1 sm} are true. On the other hand, \ref{intro lebesgue property}$\centernot{\implies}$\ref{intro unique ell1 am} because the Tsirelson sum of Tsirelson spaces, $(T\oplus T\oplus\ldots)_{T}$, has the Lebesgue property but no unique asymptotic model (see, e.g., \cite{{gaebler:sari:2023}}) and \ref{intro unique ell1 sm}$\centernot{\implies}$\ref{intro lebesgue property} because there exist Schur spaces that do not have the Lebesgue property (e.g. \cite{haydon:1984},\cite{naralenkov:2008}) and Schur spaces satisfy \ref{intro unique ell1 sm}. The complete separation of \ref{intro unique ell1 am} from \ref{intro lebesgue property} was proved in \cite{gaebler:sari:2023} by showing that the Banach space $X_{iw}$ constructed in \cite{argyros:motakis:2020} has the Lebesgue property, and yet, every subspace of $X_{iw}$ contains a $c_{0}$-asymptotic model. The complete separation of \ref{intro lebesgue property} from \ref{intro unique ell1 sm} was not attempted in \cite{gaebler:sari:2023}, and it is the purpose of the present article. By carrying out a substantial modification of the construction in \cite{argyros:motakis:2020}, we are able to achieve this new result. We construct a Banach space $X_\mathcal{D}$ with an unconditional basis such that every spreading model of $X_\mathcal{D}$ is uniformly equivalent to the unit vector basis of $\ell_{1}$ and such that every subspace of $X_\mathcal{D}$ fails the Lebesgue property.

Let us explain the main properties of $X_\mathcal{D}$. Denote by $\mathcal{D}$ the binary tree $\cup_{n=0}^\infty\{0,1\}^n$.  Following standard convention, $\{0,1\}^0$ is a singleton containing the empty sequence. For $n\in\mathbb{N}\cup \{0\}$ and $\lambda\in\{0,1\}^n$ we write $|\lambda| = n$ and say $\lambda$ has height $n$. We consider $\mathcal{D}$ with the usual partial order with minimum the empty sequence and, otherwise, $\lambda \leq \mu$ means $|\lambda|\leq |\mu|$ and $\lambda(i) = \mu(i)$, for $1\leq i\leq |\lambda|$.  The characterization of the Lebesgue property in terms of Haar-$\ell_1^+$ sequences from \cite{gaebler:sari:2023} easily yields the following (see Remark \ref{lebesgue on binary trees}): a Banach space $X$ has the Lebesgue property if and only if, {for every collection $(x_\lambda)_{\lambda\in\mathcal{D}}$ of norm-one vectors in $X$, there exists a constant $\theta>0$ such that, for every $n\in\mathbb{N}$, there exists a subset $V\subset\{0,1\}^{n}$ such that
\bes\theta\leq \frac{1}{2^{n}}\left\Vert\sum_{\lambda\in V} x_{\mu_{\lambda}}\right\Vert \ees
for some choice of nodes $(\mu_{\lambda})_{\lambda\in V}$ with $\mu_{\lambda}\geq\lambda$.}
In the space $X_\mathcal{D}$, every infinite-dimensional subspace contains a lexicographically block collection $(x_\lambda)_{{\lambda\in\mathcal{D}}}$ of norm-one vectors such that for any $n\in\N$ and collection $(\mu_\lambda)_{\lambda\in\{0,1\}^n}$ in $\mathcal{D}$ such that $\mu_\lambda\geq\lambda$, $\lambda\in\{0,1\}^n$,
\begin{equation}
\label{intro ellinfty estimate}
\max_{\lambda\in\{0,1\}^n}|a_\lambda|\leq \Big\|\sum_{{\lambda\in\{0,1\}^{n}}} a_\lambda x_{\mu_\lambda}\Big\|  \leq 3  \max_{{\lambda\in\{0,1\}^n}}|a_\lambda|, 
\end{equation}
for any scalars $(a_\lambda)_{{\lambda\in\{0,1\}^n}}$. Therefore, $X_\mathcal{D}$ fails the Lebesgue property. At the same time, every normalized weakly null sequence in $X_\mathcal{D}$ has a subsequence generating a spreading model that is $102$-equivalent to the unit vector basis of $\ell_1$. In particular, it {follows from} James's characterization of reflexivity for spaces with an unconditional basis that $X_\mathcal{D}$ is reflexive.

Our construction relies on the method of saturation under constraints with increasing weights, also employed in \cite{argyros:motakis:2020}. The major difference lies in the involvement of an additional metric constraint. A distantly related version of this constraint was also present in \cite{motakis:2024}, but its application and role were significantly different as it produced an operator-algebraic outcome (see \cite[Section 2]{motakis:2024}). The earliest form of saturation under constraints is due to Odell and Schlumprecht (see \cite{odell:schlumprecht:1995} and \cite{odell:schlumprecht:2000}). It has since been extensively developed, e.g., in the papers \cite{argyros:motakis:2014}, \cite{beanland:freeman:motakis:2015}, \cite{argyros:motakis:2020}, and many others. Let us elaborate on the main ingredients of our construction. The norm of the space $X_\mathcal{D}$ is defined via a norming set, i.e., there is a subset $W$ of $c_{00}$ containing the unit vector basis $(e_i^*)_{i=1}^\infty$ such that, for $x\in c_{00}$, $\|x\| = \sup\{|\langle f,x\rangle|:f\in W\}$, where $\langle\cdot,\cdot\rangle$ denotes the standard duality pairing of $c_{00}\times c_{00}$. The space $X_\mathcal{D}$ is the completion of $c_{00}$ under this norm. We fix a pair of lacunary sequences $m_j$, $n_j$, $j\in\N$, of natural numbers. The first sequence is used to define a collection of weights $\{\prod_{i=1}^\ell m_{j_i}:j_1,\ldots,j_\ell\in\N\}$, which is closed under multiplication. To each weight $\prod_{i=1}^\ell m_{j_i}$, we associate a Schreier family $\mathcal{S}_{n_{j_1}+n_{j_2}+\cdots+n_{j_\ell}}$. This is a compact collection of finite subsets of $\N$ recalled in Section \ref{sec2}. Every $f$ in $W$ satisfies $\|f\|_\infty \leq 1$ and it is either of the form $\pm e_i^*$ or 
\begin{equation}
\label{intro functional form}
f = \frac{1}{w(f)}\sum_{q=1}^d f_q,
\end{equation}
where $w(f) = \prod_{i=1}^\ell m_{j_i}$ and $f_1,\ldots,f_d$ are successively supported vectors such that the set $\{\min\mathrm{supp}(f_q):1\leq q\leq d\}$ is in the collection $\mathcal{S}_{n_{j_1}+n_{j_2}+\cdots+n_{j_d}}$. A Banach space defined by a norming set on which only the aforementioned restrictions are imposed is asymptotic-$\ell_1$ (see, e.g., \cite{argyros:deliyanni:1997}), an asymptotic property implying having a unique $\ell_1$ asymptotic model and thus also the Lebesgue property. To avoid this without compromising the uniform uniqueness of $\ell_1$ spreading models, just as in \cite{argyros:motakis:2020}, we additionally demand that for $f$ in $W$ as in \eqref{intro functional form}, the sequence $w(f_1),\ldots,w(f_d)$ is increasing sufficiently rapidly in relation to the supports of $f_1$,\ldots,$f_d$. Imposing these additional restrictions on $W$ yields a Banach space with a uniformly unique $\ell_1$ spreading model and no asymptotic-$\ell_1$ subspace; however, as it was proved in \cite{gaebler:sari:2023}, it still satisfies the Lebesgue property. The intuitive explanation for this is the following: despite the imposed restrictions on $W$, there are sufficiently many functionals yielding the required $\ell_1^+$ estimates on normalized collections $(x_\lambda)_{\lambda\in\mathcal{D}}$ in the defined Banach space.

To address this, we impose an additional metric constraint on $W$. Recall that $\mathcal{D}$ is a totally bounded metric space with the metric $d(\lambda,\mu) = 2^{-n}$, where, if $\lambda\neq\mu$, $n\in\N\cup\{0\}$ is the maximum height for which there exists $\nu\in\{0,1\}^n$ with $\nu\leq \lambda$ and $\nu\leq \mu$. We fix a bijection $\phi:\{m_j:j\in\N\}\to \mathcal{D}$ and to every $f\in W$ of weight $\prod_{i=1}^\ell m_{j_i}$ we associate the node $\lambda = \phi(m(f))$, where $m(f)$ is the maximum of $m_{j_1},\ldots,m_{j_\ell}$. We then impose on $W$ the restriction that for every $f$ as in \eqref{intro functional form}, the nodes $\phi(m(f_1)),\ldots,\phi(m(f_d))$ approximate some $\lambda$ in the completion of $\mathcal{D}$ and the quantification of the approximation depends on the tuple $(n_{j_1},\ldots, n_{j_\ell})$ and the supports of the $f_q$, $1\leq q\leq d$. Modulo some subtle but significant details, the construction of the space $X_\mathcal{D}$ is complete.

Using a standard technique, in every block-subspace $Y$ of $X_\mathcal{D}$, we build a collection of vectors $(x_{m_j})_{j\in\mathbb{N}}$ such that, for every $j\in\mathbb{N}$, the norm of $x_{m_j}$ can only be effectively estimated by functionals of weight $w(f) = m_j$, i.e., $x_{m_j}$ is {\em an exact vector}. We then consider the $\mathcal{D}$-indexed collection $(x_{\phi^{-1}(\lambda)})_{\lambda\in\mathcal{D}}$. This witnesses the failure of the Lebesgue property in $Y$. Indeed, for $n\in\N$, denote $\{0,1\}^n = \{\lambda_1,\ldots,\lambda_{2^n}\}$ and let $m_{j_1}$,\ldots,$m_{j_{2^n}}$ such that $\phi(m_{j_1}) \geq \lambda_1$,\ldots,$\phi(m_{j_{2^n}})\geq \lambda_{2^n}$. Then the nodes $\phi(m_{j_1})$,\ldots$\phi(m_{j_{2^n}})$ form a $2^{-n}$ separated set in $\mathcal{D}$. For every $f\in W$ as in \eqref{intro functional form}, $\phi(m(f_1))$,\ldots,$\phi(m(f_d))$ approximate a $\lambda$ in the completion of $\mathcal{D}$, and thus, roughly speaking, the components of $f$ only contribute to the norm of very few of the $x_{m_{j_i}}$, $1\leq i\leq 2^n$, yielding the $\ell_\infty$-estimate \eqref{intro ellinfty estimate}.

Our paper is organized as follows. In Section \ref{sec2}, we recall the required terminology surrounding Schreier families and special convex combinations. We also recall the definition of a Haar-$\ell_1^+$ sequence and remark on the formulation of the Lebesgue property in terms of normalized collections $(x_\lambda)_{\lambda\in\mathcal{D}}$.  In Section \ref{sec3}, we define the space $X_\mathcal{D}$ and prove the uniform uniqueness of $\ell_1$ spreading models. In Section \ref{sec4}, we define an auxiliary Banach space $X_\mathcal{D}^{\mathrm{aux},\delta}$ and prove $\ell_\infty$-type estimates for linear combinations of ``simple'' vectors in this space. In Section \ref{sec5}, we define and study rapidly increasing sequences (RISs) and the basic inequality. The former comprise a special class of sequences in $X_\mathcal{D}$, and the latter provides bounds for their linear combinations in terms of linear combinations of basis vectors in the auxiliary space. In Section \ref{sec6}, we combine all ingredients prepared in Sections \ref{sec4} and \ref{sec5} to construct in every infinite-dimensional closed subspace of $X_\mathcal{D}$ a normalized collection $(x_\lambda)_{\lambda\in\mathcal{D}}$ witnessing the failure of the Lebesgue property.

\section{Preliminary information}{\label{sec2}}
Let $A,B\subset\N$. If $\max(A)<\min(B)$, then we write $A<B$. We also write $n\leq A$ if $n\in\N$ is at most $\min(A)$ and, by convention, $\emptyset<A$ and $A<\emptyset$ for every $A\subset\N$. We denote by $c_{00}$ the vector space that consists of sequences of real numbers that have at most finitely-many non-zero terms and we let $(e_{i})$ be the unit vector basis of $c_{00}$. Note that the members of the norming set $W$ are themselves members of $c_{00}$ and, in particular, that $\{\pm e_{i}\}_{i=1}^{\infty}\subset W$. For this reason, we write $e^{*}_{i}\in c_{00}$ when referring to $e_{i}$ as a norming functional. The support of $x=(c_{i})\in c_{00}$ is the set $\supp(x)=\{i\in\N \mid c_{i}\neq 0\}$ and we say that vectors $x_{1},\ldots,x_{n}\in c_{00}$ are successive if $\supp(x_{i-1})<\supp(x_{i})$ for $2\leq i\leq n$. In this case, we use the shorthand notation $x_{1}<\ldots<x_{n}$ and we also write $n\leq x$ if $n\leq\min\supp(x)$. A finite or infinite sequence of successive vectors in $c_{00}$ is said to be a block sequence of $(e_{i})$.

The Schreier sets are an increasing sequence of sets of finite subsets of $\N$. These sets are defined inductively as follows:
\bes S_{0}=\{\{i\} \mid i\in\N\} \text{ and } S_{1}=\{F\subset\N \mid \text{card}(F)\leq F\} \ees
and we write that
\bes S_{n+1}=\{ F\subset\N \mid F=\cup_{i=1}^{d}F_{i} \text{ where } d\leq F_{1}<\ldots<F_{d}\in S_{n}\} \ees
once $S_{n}$ has been defined. We also define the set $\mathcal{A}_{3}=\{ F\subset\N \mid \text{card}(F)\leq 3\}$ and we will be interested in sets of the form
\bes S_{n}\ast\mathcal{A}_{3} = \{ F\subset\N \mid F=\cup_{i=1}^{d} F_{i} \text{ where } F_{1}<\ldots<F_{d}\in\mathcal{A}_{3} \text{ and } \{\min(F_{i})\}_{i=1}^{d}\in S_{n}\} \} \ees
and it is easy to see that $F\in S_{n}\ast\mathcal{A}_{3}$ is the union of at most three sets in $S_{n}$. A block sequence $x_{1}<\ldots<x_{k}\in c_{00}$ is said to be $S_{n}$-admissible (resp. $S_{n}\ast\mathcal{A}_{3}$-admissible) if $\{\min\supp(x_{i})\}_{i=1}^{k}\in S_{n}$ (resp. $\{\min\supp(x_{i})\}_{i=1}^{k}\in S_{n}\ast\mathcal{A}_{3}$).

Let $n\in\N$ and let $\ep>0$. Then, the vector $x=\sum_{i\in F}c_{i}e_{i}\in c_{00}$ is said to be an $(n,\ep)$-basic special convex combination ($(n,\ep)$-basic scc) if
\begin{enumerate}
\item $F\in S_{n}$, $c_{i}\geq 0$ for each $i\in F$, and $\sum_{i\in F}c_{i}=1$
\item For every $G\subset F$ with $G\in S_{n-1}$, we have that $\sum_{i\in G}c_{i}<\ep$
\end{enumerate}
and we refer the reader to \cite{argyros:deliyanni:1997}, \cite{argyros:godefroy:rosenthal:2003}, or \cite{argyros:tolias:2004} for more information. In particular, note that $\sum_{i\in G}c_{i}<3\ep$ if $G\subset F$ with $G\in S_{k}\ast\mathcal{A}_{3}$ for $k<n$, and there is also the following useful result which is Proposition 2.3 of \cite{argyros:tolias:2004}.

\begin{proposition}{\label{basic-scc-maximal}}
Let $n\in\N$. Then, for every infinite subset $M\subset\N$ and for every $F\subset M$ such that $F\in S_{n}$ is maximal with respect to inclusion, there exist real numbers $(c_{i})_{i\in F}$ so that the vector $x=\sum_{i\in F}c_{i}e_{i}\in c_{00}$ is an $(n,3/\min(F))$-basic scc.
\end{proposition}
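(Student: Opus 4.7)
The plan is to proceed by induction on $n$. For the base case $n=1$, a maximal $F\subset M$ with $F\in S_1$ satisfies $|F|=\min F$ by the definition of $S_1$ together with maximality, so uniform coefficients $c_i := 1/|F|$ define a valid $(1,3/\min F)$-basic scc: each singleton $G\in S_0$ contributes mass $1/\min F < 3/\min F$.

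For the inductive step, given $F\subset M$ maximal in $S_{n+1}$, I would invoke the recursive definition of $S_{n+1}$ to decompose $F = F_1\cup\cdots\cup F_d$ with $d=\min F$, $F_1<\cdots<F_d$ successive, and each $F_i\in S_n$. Maximality of $F$ in $S_{n+1}$ forces each $F_i$ to be maximal in $S_n$ inside the tail $M\cap(\max F_{i-1},\infty)$ (with the convention $\max F_0 = d-1$). By the inductive hypothesis, there exist coefficients $(c_j^i)_{j\in F_i}$ such that $x_i := \sum_{j\in F_i} c_j^i e_j$ is an $(n, 3/\min F_i)$-basic scc. I would then set $c_j := (1/d) c_j^i$ for $j\in F_i$, which immediately gives $\sum_{j\in F} c_j = 1$ and $c_j \geq 0$, establishing condition (1) of the definition.

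To verify condition (2), fix $G\subset F$ with $G\in S_n$, and let $i_0<\cdots<i_\ell$ enumerate the indices with $G\cap F_i\neq\emptyset$. The total mass splits as $(1/d)\sum_{k=0}^\ell \sum_{j\in G\cap F_{i_k}} c_j^{i_k}$. Whenever $G\cap F_{i_k}\in S_{n-1}$, the inductive hypothesis bounds the inner sum by $3/\min F_{i_k}\leq 3/\min F$; otherwise, only the trivial bound $1$ is available, so the combinatorial cost of such exceptional blocks has to be carefully controlled.

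The main obstacle is precisely this control. One decomposes $G$ as $H_1\cup\cdots\cup H_e$ with $e\leq \min H_1$ and $H_r\in S_{n-1}$, and observes that any $F_{i_k}$ for which $G\cap F_{i_k}\notin S_{n-1}$ must contain parts of at least two consecutive $H_r$'s, so such blocks are ``charged'' to distinct pairs of indices in the $H$-decomposition. A counting argument, exploiting that each $H_r$ starts in exactly one $F_i$ together with the hereditarity of the Schreier families, therefore bounds the number and positions of these exceptional blocks. The factor $3$ in the target $\epsilon = 3/\min F$ is precisely the slack that absorbs the exceptional contributions, yielding the required estimate $\sum_{j\in G} c_j < 3/d$.
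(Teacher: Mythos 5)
The paper does not actually prove this statement; it is quoted verbatim from Argyros--Tolias (Proposition 2.3 of \cite{argyros:tolias:2004}) and used as a black box. So there is no "paper proof" to compare against, and your proposal must stand on its own.

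Your overall plan — decompose a maximal $F\in S_{n+1}$ canonically into $d=\min F$ maximal $S_n$-blocks $F_1<\cdots<F_d$, apply the inductive hypothesis on each block, and average — is the standard and correct one, and the base case and verification of condition (1) are fine. The gap is in the treatment of condition (2). You bound the number of ``exceptional'' blocks $F_{i_k}$ (those with $G\cap F_{i_k}\notin S_{n-1}$) by $e-1$, where $e$ is the number of $S_{n-1}$-pieces in the decomposition $G=H_1\cup\cdots\cup H_e$, and you charge each such block the trivial mass $1/d$. But $e$ is only controlled by $e\leq\min H_1=\min G$, and $\min G$ can be as large as $\max F_1$, which is enormously larger than $d=\min F$. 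So the resulting bound $(e-1)/d$ on the exceptional contribution is not even bounded, let alone $\leq 2/d$, and the ``slack'' in the factor $3$ cannot absorb it. What actually makes the estimate work is the rapid growth $\min F_{i+1}>\max F_i$ (so the $\min F_i$ at least double) combined with the relation $e\leq\min G\leq\max F_{i_0}<\min F_{i_0+1}$, where $i_0$ is the first block meeting $G$; nothing of this appears in your argument. Even implementing that correction, the most direct version of the induction (bounding each $\sum_{j\in H_r\cap F_i}c_j^{(i)}$ by the inductive $3/\min F_i$ and summing) accumulates a constant that grows with $n$ rather than staying at $3$, so a sharper bookkeeping of the $i_0$-block term and/or a stronger inductive statement is needed. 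As written, the proposal is a correct outline of the construction but leaves the quantitative heart of condition (2) unproved.
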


Similarly, if $x_{1}<\ldots<x_{d}\in c_{00}$ and if $s_{i}=\min\supp(x_{i})$ for each $1\leq i\leq d$, then the vector $x=\sum_{i=1}^{d}c_{i}x_{i}$ is said to be an $(n,\ep)$-special convex combination ($(n,\ep)$-scc) if $\sum_{i=1}^{d}c_{i}e_{s_{i}}$ is an $(n,\ep)$-basic scc. It follows in this case that $\sum_{i=1}^{d}c_{i}e_{t_{i}}$ is an $(n,2\ep)$-basic scc as well where $t_{i}=\max\supp(x_{i})$ for each $1\leq i\leq d$ because
\bes \sum_{i\in G}c_{i}=\sum_{i\in G_{1}}c_{i}+\sum_{i\in G_{2}}c_{i}<\ep+\ep=2\ep \ees
if $G\subset\{t_{i}\}_{i=1}^{d}$ is $S_{n-1}$-admissible and is therefore a translation to the right of at most two sets $G_{1},G_{2}\subset\{s_{i}\}_{i=1}^{d}$ with $G_{1},G_{2}\in S_{n-1}$.

Finally, two specific asymptotic structures appear in this paper. The first is that of a spreading model from \cite{brunel:sucheston:1974}, recalled in page \pageref{sec1}. It is a consequence of Ramsey's theorem that every normalized basic sequence $(x_{i})$ in a Banach space $X$ has a subsequence that generates a spreading model of $X$. We now define the notion of a Haar-$\ell_{1}^{+}$ sequence and we need first the definition of a Haar system of partitions of $\N$.

\begin{definition}{\label{Haar_system}}
A collection $(A^{n}_{j})_{j=0,n\in\N}^{2^{n}-1}$ of infinite subsets of $\N$ is said to be a Haar system of partitions of $\N$ if the following two conditions are satisfied:
\begin{enumerate}
\item For every $n\in\N$, we have that $\cup_{j=0}^{2^{n}-1}A^{n}_{j}=\N$ and $A^{n}_{j}\cap A^{n}_{j'}=\emptyset$ if $j\neq j'$.
\item For every $n\in\N$ and for every $0\leq j\leq 2^{n}-1$, we have that $A^{n}_{j}= A^{n+1}_{2j}\cup A^{n+1}_{2j+1}$.
\end{enumerate}
\end{definition}

The connection between the Haar-$\ell_{1}^{+}$ condition (defined below) and Riemann integrability is due to the fact that the infinite set $A^{n}_{j}$ of the Haar system of partitions of $\N$ corresponds to the dyadic rational numbers in the dyadic interval $\left(\frac{j}{2^{n}},\frac{j+1}{2^{n}}\right)$ (see, e.g., \cite[Theorem 3.1]{gaebler:sari:2023}).

\begin{definition}{\label{Haar-l1+}}
A normalized basic sequence $(x_{i})$ in $X$ is said to be Haar-$\ell_{1}^{+}$ if, for every Haar system of partitions $(A^{n}_{j})_{j=0,n\in\N}^{2^{n}-1}$ of $\N$, there exists a constant $C>0$ such that
\bes C\leq \lim_{n\to\infty}\sup\left\{ \frac{1}{2^{m}}\left\Vert \sum_{j=0}^{2^{m}-1}x_{i_{j}}\right\Vert \;\middle\vert\; m\geq n \text{ and } 2^{m}\leq i_{j}\in A^{m}_{j}\right\}.\ees
\end{definition}

\begin{remark}
\label{lebesgue on binary trees}
For a Banach space $X$ the following are equivalent.
\begin{enumerate}
    \item $X$ has the Lebesgue property.
    \item Every normalized basic sequence in $X$ is Haar-$\ell_{1}^{+}$.
    \item {For every collection $(x_\lambda)_{\lambda\in\mathcal{D}}$ of norm-one vectors in $X$, there exists a constant $\theta>0$ such that, for every $n\in\mathbb{N}$, there exists a subset $V\subset\{0,1\}^{n}$ such that
\bes\theta\leq \frac{1}{2^{n}}\left\Vert\sum_{\lambda\in V} x_{\mu_{\lambda}}\right\Vert \ees
for some choice of nodes $(\mu_{\lambda})_{\lambda\in V}$ with $\mu_{\lambda}\geq\lambda$.}
\end{enumerate}

Indeed, the assertion $(1)\iff (2)$ is established in \cite{gaebler:sari:2023}. In addition, it is easy to see that $(1)\implies (3)$ as collection witnessing the negation of (3) can be easily used to construct a Dirichlet-type function with Riemann integral zero. Lastly, for $(3)\implies (2)$, if there exists a normalized basic sequence $(x_i)$ in $X$ that is not Haar-$\ell_{1}^{+}$, then it is easy to construct a collection $(x_{\lambda})_{\lambda\in\mathcal{D}}$ of norm-one vectors for which the claim of $(3)$ does not hold as follows. Fix an order preserving bijection $\mathcal{D}\to \{A^n_j\mid n\in\N, 0\leq j\leq 2^n-1\}$ such that, for every $n\in\N$, $\lambda\mapsto A^{|\lambda|}_{j_\lambda}$, for some $0\leq j_\lambda\leq 2^{|\lambda|}-1$. Then, for every $\lambda\in\mathcal{D}$, choose $i_\lambda\in A^{|\lambda|}_{j_\lambda}$ such that $i_\lambda\geq 2^{|\lambda|}$ and put $x_\lambda = x_{i_\lambda}$. Note that $(x_\lambda)_{\lambda\in\mathcal{D}}$ can be chosen Schauder basic in the lexicographical oder of $\mathcal{D}$.
\end{remark}

\section{Construction of $X_\mathcal{D}$ and uniformly unique $\ell_{1}$ spreading model}{\label{sec3}}

In this section we define the norming set $W$ inducing the Banach space $X_\mathcal{D}$. It is defined as an increasing union $W = \cup_{k=0}^\infty W_k$, where $W_0 = \{\pm e_i^*\}_{i=1}^\infty$ and, in each step, having defined $W_k$, we formulate the operations and constraints that are used to produce $W_{k+1}$. The quantification of the metric constraints discussed in the introduction is defined inductively and carries a certain complexity; this is necessary to balance the uniform uniqueness of $\ell_1$ spreading models with the $\ell_\infty$ estimates on tree-indexed collection.  At the end of this section, we prove that every normalized weakly null sequence in $X_\mathcal{D}$ has a subsequence generating a spreading model that is 102-equivalent to the unit vector basis of $\ell_1$.

Fix a pair of strictly increasing sequences $(m_{j})$ and $(n_{j})$ of positive integers such that $m_{1}=100$, $n_{1}=1$,
\begin{enumerate}[label=(\arabic*)]
\item\label{condition m_j} $m_{j+1}\geq j^2m_j^2$, for every $j\in\N$,
\item\label{condition n_j} $n_{j+1}>\frac{4}{\log(100)}n_{j}\log(m_{j+1})$.
\end{enumerate}

Let $\mathcal{D} = \cup_{n=0}^\infty\{0,1\}^n$ with the partial order and metric $d$ defined in the introduction. We note that the completion $\overline{\mathcal{D}}$ of $\mathcal{D}$ is a compact metric space. Denote $\mathcal{M} = \{\infty\}\cup\{m_j:j\in\N\}$ and let $\phi:\mathcal{M}\to\mathcal{D}$ be a bijection so that the nodes of $\mathcal{D}$ are ordered lexicographically with respect to $\mathcal{M}$, that is, $\phi(\infty) = \emptyset$, $\phi(m_{1}) = (0)$, $\phi(m_{2})=(1)$, $\phi(m_{3}) = (0,0)$, and so on. Lastly, as a matter of notational convenience, we always write $d(m_{j},\cdot)$ instead of the formally correct $d(\phi(m_{j}),\cdot)$.

Now, let $W_{0}=\{\pm e_{i}^{*}\}_{i=1}^{\infty}\subset c_{00}$ and write $w(\pm e^{*}_{i})=m(\pm e^{*}_{i})=\infty$ for each $i\in\N$. Then, define the set
\bes W_{1}=W_{0}\cup\left\{\frac{1}{m_{j_{1}}\cdots m_{j_{l}}}\sum_{k=1}^{d}e^{*}_{i_{k}} \;\middle\vert\; \substack{ e^{*}_{i_{1}}<\ldots<e^{*}_{i_{d}} \text{ are } \\ S_{n_{j_{1}}+\cdots+n_{j_{l}}}\text{-admissible}} \right\} \ees
where $w(f)=m_{j_{1}}\cdots m_{j_{l}}$, $\vec w(f) = (m_{j_1},\ldots,m_{j_\ell})$, and $m(f)=\max_{1\leq z\leq l}m_{j_{z}}$ for $f\in W_{1}\setminus W_{0}$. { For $f$ in $W_1\setminus W_0$, $\vec w(f)$ may not be uniquely defined, and thus neither are $w(f)$ and $m(f)$. The notation $\vec w(f) = (m_{j_1},\ldots,m_{j_\ell})$ means that $f$ admits a representation as above, and it is fixed while working with $f$. This will also apply to $f\in W_k$, $k>1$. This is common in this type of construction and does not pose a problem. In fact, insisting on uniquely defining the weight may interfere with constructing functionals that are used to prove the properties of the space.} We next define the following notion of proximity with respect to $\overline{\mathcal{D}}$ for functionals in $W_{1}$.

\begin{definition}{\label{packed}}
The functionals $f_{1}<\ldots<f_{d}$ in $W_{1}$ are said to be $(n,\lambda)$-packed for some $\lambda\in\overline{\mathcal{D}}$ if the following conditions are satisfied:
\begin{enumerate}
\item $f_{1}<\ldots<f_{d}$ are $S_{n}$-admissible
\item $w(f_{i})\geq 2^{\max\supp(f_{i-1})}$ for $2\leq i\leq d$
\item $d(m(f_{i}),\lambda)\leq 2^{-\max\supp(f_{i-1})}$ for $2\leq i\leq d$
\end{enumerate}
and if, in addition, we have that $\max_{1\leq i\leq d}d(m(f_{i}),\lambda)\leq \eta$, then $f_{1}<\ldots<f_{d}$ are said to be $(n,\lambda,\eta)$-packed.
\end{definition}

This definition allows for the following inductive definition as well.

\begin{definition}{\label{big_packed}}
The functionals $f_{1}<\ldots<f_{d}$ in $W_{1}$ are said to be $(n_{j_{1}},\ldots,n_{j_{l}},\lambda)$-packed for some $\lambda\in\overline{\mathcal{D}}$ if the following claims hold:
\begin{enumerate}
\item $w(f_{i})\geq 2^{\max\supp(f_{i-1})}$ for $2\leq i\leq d$
\item There exists a partition $F_{1}<\ldots<F_{M}$ of $\{1,\ldots,d\}$ such that the set $\{f_{\min(F_{k})}\}_{k=1}^{M}$ is $S_{n_{j_{l}}}$-admissible
\item There exist nodes $\lambda_{1},\ldots,\lambda_{M}\in\overline{\mathcal{D}}$ such that:
\begin{enumerate}
\item $(f_{i})_{i\in F_{1}}$ is $(n_{j_{1}},\ldots,n_{j_{l-1}},\lambda_{1})$-packed
\item $(f_{i})_{i\in F_{k}}$ is $(n_{j_{1}},\ldots,n_{j_{l-1}},\lambda_{k},2^{-\max\supp(f_{\max(F_{k-1})})})$-packed for $2\leq k\leq M$
\item $d(\lambda_{k},\lambda)\leq 2^{-\max\supp(f_{\max(F_{k-1})})}$ for $2\leq k\leq M$
\end{enumerate}
\end{enumerate}
and if, in addition, we have that $\max_{1\leq i\leq d}d(m(f_{i}),\lambda)\leq\eta$, then we say that $f_{1}<\ldots<f_{d}$ are $(n_{j_{1}},\ldots,n_{j_{l}},\lambda,\eta)$-packed.
\end{definition}

We now define the next set of weighted functionals
\bes W_{2}=W_{1}\cup\left\{ \frac{1}{m_{j_{1}}\cdots m_{j_{l}}}\sum_{i=1}^{d}f_{i} \;\middle\vert\; \substack{ f_{1}<\ldots<f_{d} \text{ are in } W_{1} \text{ and are} \\  (n_{j_{1}},\ldots,n_{j_{l}},\lambda)\text{-packed for some } \lambda\in\overline{\mathcal{D}}} \right\} \ees
with $w(f)=m_{j_{1}}\cdots m_{j_{l}}$, $\vec w(f) = (m_{j_1},\ldots, m_{j_\ell})$, and $m(f)=\max_{1\leq z\leq l}m_{j_{z}}$. Now, once we have defined the sets $W_{0},W_{1},\ldots,W_{k}$, we replace $W_{1}$ in Definitions \ref{packed} and \ref{big_packed} with $W_{k}$ and we write that
\bes W_{k+1}=W_{k}\cup\left\{ \frac{1}{m_{j_{1}}\cdots m_{j_{l}}}\sum_{i=1}^{d}f_{i} \;\middle\vert\; \substack{ f_{1}<\ldots<f_{d} \text{ are in } W_{k} \text{ and are} \\  (n_{j_{1}},\ldots,n_{j_{l}},\lambda)\text{-packed for some } \lambda\in\overline{\mathcal{D}}} \right\} \ees
for each $k\in\N$. We then define our norming set $W=\cup_{k=0}^{\infty}W_{k}$ and we let the Banach space $X_\mathcal{D}$ be the completion of $c_{00}$ with respect to the norm that is given by $\|x\|=\sup\{f(x) \mid f\in W\}$ for $x\in c_{00}$.

\begin{remark}
\label{unconditionality and the like}
By induction on $k$, where $W = \cup_{k=0}^\infty W_k$, it is easy to prove the following.
\begin{enumerate}[label=(\arabic*)]
    \item\label{unconditionality and the like 1} For every $(n_{j_1},\ldots,n_{j_\ell},\lambda)$-packed sequence $f_1<\cdots<f_d$ in $W$,
    \[f = \frac{1}{m_{j_1}\cdots m_{j_\ell}}\sum_{i=1}^df_i\]
    is in $W$.
    \end{enumerate}
Incorporating a nested induction on $\ell$, the following items can be shown.
    \begin{enumerate}[resume,label=(\arabic*)]

    \item\label{unconditionality and the like 2}  A subsequence of a $(n_{j_1},\ldots,n_{j_\ell},\lambda)$-packed sequence $f_1<\cdots<f_d$ in $W$ is also $(n_{j_1},\ldots,n_{j_\ell},\lambda)$-packed.

    \item\label{unconditionality and the like 3} If $f_1<\cdots<f_d$ is a $(n_{j_1},\ldots,n_{j_\ell},\lambda)$-packed sequence in $W$ and $g_1<\cdots<g_d$ are in $c_{00}$ such that, for $1\leq i\leq d$ and $s\in\mathrm{supp}(g_i)$, we have $|g_i(e_s)| = |f_i(e_s)|$ then $g_1$,\ldots,$g_d$ are also in $W$ and they are $(n_{j_1},\ldots,n_{j_\ell},\lambda)$-{packed}.

\end{enumerate}
In particular, \ref{unconditionality and the like 2} and \ref{unconditionality and the like 3} yield that $W$ is closed under restricting the supports of its members to subsets and changing the signs of their scalar coefficients, and thus, $(e_i)$ is a 1-unconditional basis for $X_\mathcal{D}$.
\end{remark}

We now prove that every spreading model of $X_\mathcal{D}$ is uniformly equivalent to the unit vector basis of $\ell_{1}$. In fact, the precise statement of the following theorem is slightly stronger than this assertion; it will be required in its full generality later, in the proof of Proposition \ref{scc existene in block subspace}.

\begin{theorem}{\label{l1_spreading_model}}
Let $(x_{r})$ be a normalized block  sequence  of $(e_{i})$. Then, there exists an infinite subset $\Omega\subset\N$ such that, for every $j_{0}\in\N$, there is the estimate
\bes \left\Vert \sum_{r\in F}c_{r}x_{r}\right\Vert \geq 0.99\frac{1}{m_{j_{0}}}\sum_{r\in \Gamma}|c_{r}| \ees
for every subset $\Gamma\subset\Omega$ such that $(x_{r})_{r\in \Gamma}$ are $S_{n_{j_{0}}}$-admissible and for every choice $(c_{r})_{r\in \Gamma}$ of scalars.
\end{theorem}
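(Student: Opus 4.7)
The plan is to construct $\Omega$ together with norming functionals $(g_r)_{r \in \Omega}$ such that, for every $j_0 \in \N$ and every $S_{n_{j_0}}$-admissible $\Gamma \subseteq \Omega$, the combination
\[
f \;=\; \frac{1}{m_{j_0}}\sum_{r\in\Gamma}\epsilon_r g_r, \quad \epsilon_r = \mathrm{sign}\bigl(c_r g_r(x_r)\bigr),
\]
lies in $W$. If the $g_r$'s have pairwise disjoint supports and each satisfies $g_r(x_r) > 0.99$, the lower bound then follows by direct evaluation: $f\bigl(\sum_r c_r x_r\bigr) = \frac{1}{m_{j_0}}\sum_{r\in\Gamma}|c_r|\,g_r(x_r) \geq \frac{0.99}{m_{j_0}}\sum_{r\in\Gamma}|c_r|$.

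To set up, I would first invoke Hahn-Banach to obtain $\tilde g_r \in W$ with $\tilde g_r(x_r) > 0.99$, and then use $1$-unconditionality (Remark \ref{unconditionality and the like}) to replace $\tilde g_r$ by its support-restriction to $\supp(x_r)$, yielding $g_r \in W$ with $g_r(x_r) > 0.99$ and $\supp(g_r) \subseteq \supp(x_r)$. In particular $g_r(x_s) = 0$ for $s \neq r$, which is what gives the clean evaluation of $f$ above. The remaining task is to arrange $f \in W$, which by Remark \ref{unconditionality and the like}(1) reduces to the family $(g_r)_{r \in \Gamma}$ being $(n_{j_0}, \lambda)$-packed for some $\lambda \in \overline{\mathcal{D}}$. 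Unpacking Definition \ref{packed}, this amounts to three conditions on consecutive pairs $r' < r$ in $\Gamma$: $S_{n_{j_0}}$-admissibility, $w(g_r) \geq 2^{\max \supp g_{r'}}$, and $d(m(g_r), \lambda) \leq 2^{-\max \supp g_{r'}}$. The admissibility condition is automatic from $\{\min \supp x_r\}_{r \in \Gamma} \in S_{n_{j_0}}$, $\min \supp g_r \geq \min \supp x_r$, and the spreading property of the Schreier families.

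I would construct $\Omega = \{r_1 < r_2 < \cdots\}$ by recursion. Having selected $r_1 < \cdots < r_{k-1}$ and $g_{r_1},\ldots,g_{r_{k-1}}$, I pick $r_k$ and $g_{r_k}$ so that $w(g_{r_k}) \geq 2^{\max \supp g_{r_{k-1}}}$ and $d(\phi(m(g_{r_k})), \lambda) \leq 2^{-\max \supp g_{r_{k-1}}}$, the limit $\lambda \in \overline{\mathcal{D}}$ being fixed at the outset using compactness of $\overline{\mathcal{D}}$ and the density of $\phi(\mathcal{M}) = \mathcal{D}$. Because the weights $w(g_{r_k})$ then grow along $\Omega$, securing the conditions only at each consecutive stage suffices for any $\Gamma \subseteq \Omega$.

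The hard part is the weight-growth condition $w(g_{r_k}) \geq 2^{\max \supp g_{r_{k-1}}}$: normalization of $x_{r_k}$ alone does not guarantee the existence of norming functionals of arbitrarily large weight. I expect a case analysis here --- the ``coordinate'' case in which infinitely many $x_r$ satisfy $\|x_r\|_\infty$ close to one, allowing $g_r = \pm e^{*}_{i_r}$ with $w(g_r) = m(g_r) = \infty$ (so that both the weight and metric conditions hold trivially with $\lambda = \emptyset$), versus the ``spread-out'' case, in which one exploits the rapid growth $m_{j+1} \geq j^2 m_j^2$ together with the Schreier admissibility framework to produce approximately norming functionals of progressively higher weight. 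A diagonal combination of the two regimes then yields $\Omega$ with the required packing property, from which the stated estimate is immediate.
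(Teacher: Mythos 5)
Your overall frame (choose norming functionals supported in $\supp(x_r)$, pass to a subsequence so that a packing condition holds along consecutive elements, then apply Remark \ref{unconditionality and the like}\ref{unconditionality and the like 1} and evaluate) is the right shape, and your ``coordinate''/large-weight regime is essentially the paper's case $\limsup_r w(f_r)=\infty$: there one can indeed pass to a subsequence so that the whole functionals $f_r$ are very fast growing and $\phi(m(f_r))$ converges rapidly to some $\lambda\in\overline{\mathcal{D}}$, and then $\frac{1}{m_{j_0}}\sum_{r\in\Gamma}f_r\in W$. The genuine gap is your ``spread-out case''. In this space it is simply false that a normalized block vector admits $0.99$-norming functionals of arbitrarily large weight: the whole point of the constrained construction is that it is not asymptotic-$\ell_1$, and the exact vectors built in Section \ref{sec6} are (up to constants) normed only by functionals whose weight involves the single value $m_j$; a functional with $w(f)\gg m_j$ or $m(f)\neq m_j$ gives a value on the order of $m_j/w(f)$ plus small errors (this is exactly what Propositions \ref{c0_type_estimate} and \ref{basic_inequality} quantify). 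So when $\limsup_r w(f_r)<\infty$ you cannot arrange the packing condition $w(g_{r_k})\geq 2^{\max\supp(g_{r_{k-1}})}$ while keeping $g_{r_k}(x_{r_k})>0.99$, and your plan of writing the top functional as $\frac{1}{m_{j_0}}\sum_r \epsilon_r g_r$ with whole norming functionals $g_r$ breaks down; no amount of exploiting $m_{j+1}\geq j^2m_j^2$ or Schreier admissibility produces the missing high-weight norming functionals.

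The paper's treatment of this bounded-weight case is the idea your proposal is missing. After passing to a subsequence on which $w(f_r)=m_{j_1}\cdots m_{j_l}$ is constant and the packing nodes $\lambda_r$ converge fast to some $\lambda$, one does not combine the $f_r$ themselves but opens them up: each $f_r=\frac{1}{m_{j_1}\cdots m_{j_l}}\sum_q f^r_q$ is $(n_{j_1},\ldots,n_{j_l},\lambda_r)$-packed, and one discards the first block $F^r_1$ of its packing partition. Since the weight-growth and proximity constraints in Definitions \ref{packed} and \ref{big_packed} bind every element except the first ones, the retained tails automatically satisfy these constraints both internally and across the junctions between consecutive $r$ (their supports sit inside $\supp(x_r)$, so $2^{\max\supp(x_{r-1})}$ dominates the needed thresholds), and the concatenation of all tails over $r\in\Gamma$ is $(n_{j_1},\ldots,n_{j_l},n_{j_0},\lambda)$-packed; hence $\mathscr{F}=\frac{1}{m_{j_1}\cdots m_{j_l}m_{j_0}}\sum_{r\in\Gamma}\sum_{q\geq\min(F^r_2)}f^r_q$ is in $W$, a functional of weight $w(f_r)m_{j_0}$ rather than of the form you propose. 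The price of deleting $F^r_1$ is at most $\frac{1}{m_{j_l}}\leq\frac{1}{m_1}=\frac{1}{100}$ of each $f_r(x_r)=1$, which is precisely where the constant $0.99$ comes from; in your scheme the $0.99$ was to come from Hahn--Banach, which is not where the loss actually occurs.
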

\begin{proof}
Choose for every $r\in\N$ a norming functional $f_{r}\in W$ so that $f_{r}(x_{r})=1$ and so that $\supp(f_{r})\subset\supp(x_{r})$. Then, there are two cases to consider.
\begin{itemize}
\item[\textbf{(i)}] $\limsup_{r\to\infty}w(f_{r})=L<\infty$
\end{itemize} 
Here, we may pass to an infinite subset $\Omega_{1}\subset\N$ so that $w(f_{r})=m_{j_{1}}\cdots m_{j_{l}}$ for each $r\in\Omega_{1}$. Note that each $f_{r}$ with $r\in\Omega_{1}$ is then $(n_{j_{1}},\ldots,n_{j_{l}},\lambda_{r})$-packed for some $\lambda_{r}\in\overline{\mathcal{D}}$. We may then pass to a further infinite subset $\Omega_{2}\subset\Omega_{1}$ so that $d(\lambda_{r},\lambda)\leq \frac{1}{2}\cdot 2^{-\max\supp(x_{r-1})}$ for each $r\in\Omega_{2}\setminus\{\min(\Omega_{2})\}$ and for some $\lambda\in\overline{\mathcal{D}}$. Now, let $\Omega=\Omega_{2}$ and consider the subsequence $(x_{r})_{r\in\Omega_{2}}$. Let $\Gamma\subset\Omega$ be a subset such that the block vectors $(x_{r})_{r\in \Gamma}$ are $S_{n_{j_{0}}}$-admissible and let 
\bes f_{r}=\frac{1}{m_{j_{1}}\cdots m_{j_{l}}}\sum_{q=1}^{d_{r}}f^{r}_{q} \qquad r\in\Gamma \ees
be the associated norming functionals. For each $r\in\Gamma$, let $F^{r}_{1}<\ldots<F^{r}_{M_{r}}$ be the partition of $\{1,\ldots,d_{r}\}$ and let $\lambda^{r}_{1},\ldots,\lambda^{r}_{M_{r}}\in\overline{\mathcal{D}}$ be the corresponding nodes so that Definition \ref{big_packed} is satisfied because the functionals $f^{r}_{1}<\ldots<f^{r}_{d_{r}}$ are $(n_{j_{1}},\ldots,n_{j_{l}},\lambda_{r})$-packed. We will now show that the concatenation
\bes f^{\min(\Gamma)}_{\min(F^{\min(\Gamma)}_{2})}<\ldots<f^{\min(\Gamma)}_{d_{\min(\Gamma)}}<\ldots<f^{\max(\Gamma)}_{\min(F_{2}^{\max(\Gamma)})}<\ldots<f^{\max(\Gamma)}_{d_{\max(\Gamma)}} \ees
is in $W$. First, note that there is some $\kappa\in\N$ so that each functional in this concatenation is a member of $W_{\kappa}$. It therefore remains to show that these functionals are $(n_{j_{1}},\ldots,n_{j_{l}},n_{j_{0}},\lambda)$-packed. Indeed, it follows immediately that they are $S_{n_{j_{1}}+\cdots+n_{j_{l}}+n_{j_{0}}}$-admissible and are very fast growing in the sense of Definition \ref{big_packed}. Then, let $F_{\min(\Gamma)}<\ldots<F_{\max(\Gamma)}$ be the partition of the number of functionals in the above concatenation such that $(f_{i})_{i\in F_{r}}=\{f^{r}_{\min(F^{r}_{2})},\ldots,f^{r}_{d_{r}}\}$ and note that $\{f_{\min(F_{r})}\}_{r\in\Gamma}=\{f^{r}_{\min(F^{r}_{2})}\}_{r\in\Gamma}$ is $S_{n_{j_{0}}}$-admissible. Moreover:
\begin{enumerate}
\item $(f_{i})_{i\in F_{1}}=\{f^{1}_{\min(F^{1}_{2})},\ldots,f^{1}_{d_{1}}\}$ is $(n_{j_{1}},\ldots,n_{j_{l}},\lambda_{1})$-packed.
\item $(f_{i})_{i\in F_{r}}=\{f^{r}_{\min(F^{r}_{2})},\ldots,f^{r}_{d_{r}}\}$ is $(n_{j_{1}},\ldots,n_{j_{l}},\lambda_{r})$-packed for each $r\in\Gamma\setminus\{\min(\Gamma)\}$ and, in addition, for each $f_{i}$ with $i\in F^{r}_{k}$ (where $2\leq k\leq M_{r}$), there is
\begin{align*} d(m(f_{i}),\lambda_{r}) &\leq d(m(f_{i}),\lambda^{r}_{k})+d(\lambda^{r}_{k},\lambda_{r}) \\ &\leq 2\cdot  2^{-\max\supp(f_{\max(F^{r}_{k-1})})}\leq 2^{-\max\supp(f_{\max(F_{r-1})})}\end{align*}
so that, in fact, $(f_{i})_{i\in F_{r}}$ is $(n_{j_{1}},\ldots,n_{j_{l}},\lambda_{r}, 2^{-\max\supp(f_{\max(F_{r-1})})})$-packed.
\item $d(\lambda_{r},\lambda)\leq\frac{1}{2}\cdot 2^{-\max\supp(x_{r-1})}\leq 2^{-\max\supp(f_{\max(F_{r-1})})}$ for each $r\in\Gamma\setminus\{\min(\Gamma)\}$.
\end{enumerate}
Putting these pieces together, it follows that the functional
\bes \mathscr{F}=\frac{1}{m_{j_{1}}\cdots m_{j_{l}}m_{j_{0}}}\sum_{r\in\Gamma}\sum_{q=\min(F^{r}_{2})}^{d_{r}}f^{r}_{q} \ees
is $(n_{j_{1}},\ldots,n_{j_{l}},n_{j_{0}},\lambda)$-packed and is therefore a member of $W_{\kappa+1}\subset W$. Now, because $(x_{r})_{r\in\Omega}$ is itself $1$-unconditional, there is the estimate
\begin{align*} \left\Vert \sum_{r\in \Gamma}c_{r}x_{r}\right\Vert = \left\Vert \sum_{r\in\Gamma}|c_{r}|x_{r}\right\Vert &\geq\mathscr{F}\left(\sum_{r\in\Gamma}|c_{r}|x_{r}\right) \\ &=\frac{1}{m_{j_{1}}\cdots m_{j_{l}}m_{j_{0}}}\sum_{r\in\Gamma}\sum_{q=\min(F^{r}_{2})}^{d_{r}}f^{r}_{q}\left(\sum_{p\in\Gamma}|c_{p}|x_{p}\right) \\ &=\frac{1}{m_{j_{0}}}\sum_{r\in\Gamma}\frac{|c_{r}|}{m_{j_{1}}\cdots m_{j_{l}}}\sum_{q=\min(F^{r}_{2})}^{d_{r}}f^{r}_{q}(x_{r}) \\ &=\frac{1}{m_{j_{0}}}\sum_{r\in\Gamma}|c_{r}|\left(f_{r}(x_{r})-\frac{1}{m_{j_{1}}\cdots m_{j_{l}}}\sum_{q\in F^{r}_{1}}f^{r}_{q}(x_{r})\right) \\ &\geq \frac{1}{m_{j_{0}}}\sum_{r\in\Gamma}|c_{r}|\left(1 - \frac{1}{m_{j_{l}}}\cdot\frac{1}{m_{j_{1}}\cdots m_{j_{l-1}}}\sum_{q\in F^{r}_{1}}f^{r}_{q}(x_{r})\right) \\ &\geq \frac{1}{m_{j_{0}}}\sum_{r\in\Gamma}|c_{r}|\left(1-\frac{1}{m_{1}}\right) = \frac{m_{1}-1}{m_{1}m_{j_{0}}}\sum_{r\in\Gamma}|c_{r}|\end{align*}
where we have used the fact that $\frac{1}{m_{j_{1}}\cdots m_{j_{l-1}}}\sum_{q\in F^{r}_{1}}f^{r}_{q}\in W$, and this completes the proof for the first case.

\begin{itemize}
\item[\textbf{(ii)}] $\limsup_{r\to\infty}w(f_{r})=\infty$
\end{itemize}
In this case, there exists an infinite subset $\Omega_{1}\subset\N$ such that $w(f_{r})$ for $r\in\Omega_{1}\setminus\{\min(\Omega_{1})\}$ is very fast growing in the sense of Definition \ref{packed}. Then, let $\lambda_{r}$ be the node in $\overline{\mathcal{D}}$ that corresponds to $m(f_{r})$ for each $r\in\Omega_{1}$. We may then pass to a further infinite subset $\Omega_{2}\subset\Omega_{1}$ so that $d(\lambda_{r},\lambda)\leq \frac{1}{2}\cdot 2^{-\max\supp(x_{r-1})}$ for some $\lambda\in\overline{\mathcal{D}}$ and for each $r\in\Omega_{2}\setminus\{\min(\Omega_{2})\}$. Now, let $\Omega=\Omega_{2}$ and consider the subsequence $(x_{r})_{r\in\Omega}$. Let $\Gamma\subset\Omega$ be a subset such that the block vectors $(x_{r})_{r\in\Gamma}$ are $S_{n_{j_{0}}}$-admissible and let $f_{\min(\Gamma)}<\ldots<f_{\max(\Gamma)}$ be the corresponding norming functionals. Note that there is some $\kappa\in\N$ so that $f_{r}\in W_{\kappa}$ for each $r\in\Gamma$. Note also that, by construction, these norming functionals are $S_{n_{j_{0}}}$-admissible and are very fast growing in the sense of Definition \ref{packed}. Moreover, there is the estimate
\bes d(\lambda_{r},\lambda)\leq 2^{-\max\supp(x_{r-1})}\leq 2^{-\max\supp(f_{r-1})} \ees
for each $r\in\Gamma\setminus\{\min(\Gamma)\}$ so that these functionals are $(n_{j_{0}},\lambda)$-packed. Then, we have that $\frac{1}{m_{j_{0}}}\sum_{r\in\Gamma}f_{r}\in W_{\kappa+1}\subset W$ so that
\begin{align*} \left\Vert \sum_{r\in\Gamma}c_{r}x_{r}\right\Vert =\left\Vert \sum_{r\in\Gamma}|c_{r}|x_{r}\right\Vert &\geq \frac{1}{m_{j_{0}}}\sum_{r\in\Gamma}f_{r}\left(\sum_{p\in\Gamma}|c_{p}|x_{p}\right) \\ &=\frac{1}{m_{j_{0}}}\sum_{r\in\Gamma}|c_{r}|f_{r}(x_{r})=\frac{1}{m_{j_{0}}}\sum_{r\in\Gamma}|c_{r}|\geq\frac{m_{1}-1}{m_{1}m_{j_{0}}}\sum_{r\in\Gamma}|c_{r}| \end{align*}
and this completes the proof of the theorem.
\end{proof}

\section{Auxiliary Banach spaces and $c_{0}$-type estimate}{\label{sec4}}

So far we have proved that all spreading models admitted by normalized weakly null sequences in $X_\mathcal{D}$ are uniformly equivalent to the unit vector basis of $\ell_1$. The remaining goal is to define in each subspace of $X_\mathcal{D}$ a normalized collection $(x_\lambda)_{\lambda\in\mathcal{D}}$ satisfying the $\ell_\infty$-estimate \eqref{intro ellinfty estimate}, and this will require some effort. As a first step, we define an auxiliary norming set $W^{\text{aux},\delta}$ inducing a norm on $c_{00}$ and an auxiliary Banach space $X_{\mathcal{D}}^{\text{aux},\delta}$. We then prove estimates for the norm of linear combinations of ``simple'' vectors, specifically weighted $(n,\varepsilon)$ basic special convex combinations, in the auxiliary space. These will be relevant later when a tool called the basic inequality will yield high-precision estimates of linear combinations of special sequences of vectors, called rapidly increasing sequences, in $X_{\mathcal{D}}$, in terms of sequences of basis vectors in $X_{\mathcal{D}}^{\text{aux},\delta}$.

We derive in this section a $c_{0}$-type estimate for the action of auxiliary functionals on weighted $(n,\ep)$-basic scc vectors. The idea is that the functional evaluates to something small if its weight is in some sense a mismatch to the $(n,\ep)$-basic scc vector weight. Let us first define the auxiliary functionals and auxiliary Banach spaces.

We will define an auxiliary Banach space for each $\delta>0$. First, fix $\delta>0$ and let $W_{0}^{\text{aux},\delta}=\{\pm e_{i}^{*}\}_{i=1}^{\infty}\subset c_{00}$ as before. Then, also as before, define $w(\pm e^{*}_{i})=m(\pm e^{*}_{i})=\infty$ and define the set of auxiliary weighted functionals
\bes W^{\text{aux},\delta}_{1}=W^{\text{aux},\delta}_{0}\cup\left\{\frac{1}{m_{j_{1}}\cdots m_{j_{l}}}\sum_{k=1}^{d}e^{*}_{i_{k}} \;\middle\vert\; \substack{ e^{*}_{i_{1}}<\ldots<e^{*}_{i_{d}} \text{ are } \\ S_{n_{j_{1}}+\cdots+n_{j_{l}}}\ast\mathcal{A}_{3}\text{-admissible}} \right\} \ees
where $w(f)=m_{j_{1}}\cdots m_{j_{l}}$, $m(f)=\max_{1\leq z\leq l}m_{j_{z}}$, and $\vec w(f) = (m_{j_1},\ldots,m_{j_\ell})$, for $f\in W_{1}^{\text{aux},\delta}\setminus W_{0}^{\text{aux},\delta}$. We next define the following notion of proximity with respect to $\overline{\mathcal{D}}$ for functionals in $W^{\text{aux},\delta}_{1}$.

\begin{definition}{\label{delta_packed}}
The functionals $f_{1}<\ldots<f_{d}$ in $W^{\text{aux},\delta}_{1}$ are said to be $(n,\lambda)_{\delta}$-packed for some $\lambda\in\overline{\mathcal{D}}$ if the following conditions are satisfied:
\begin{enumerate}
\item $f_{1}<\ldots<f_{d}$ are $S_{n}\ast\mathcal{A}_{3}$-admissible
\item $w(f_{i})\geq 2^{\max\supp(f_{i-2})}$ for $3\leq i\leq d$
\item $d(m(f_{i}),\lambda)\leq\delta$ for $f_{i}\notin W^{\text{aux},\delta}_{0}$ and for $2\leq i\leq d$
\end{enumerate}
\end{definition}

This definition is the base case for the following inductive definition.

\begin{definition}{\label{big_delta_packed}}
The functionals $f_{1}<\ldots<f_{d}$ in $W^{\text{aux},\delta}_{1}$ are said to be $(n_{j_{1}},\ldots,n_{j_{l}},\lambda)_{\delta}$-packed for some $\lambda\in\overline{\mathcal{D}}$ if the following conditions hold:
\begin{enumerate}
\item $w(f_{i})\geq 2^{\max\supp(f_{i-2})}$ for $3\leq i\leq d$
\item There exists a partition $F_{1}<\ldots<F_{M}$ of $\{1,\ldots,d\}$ such that the set $\{f_{\min(F_{k})}\}_{k=1}^{M}$ is $S_{n_{j_{l}}}$-admissible and such that the functionals $(f_{i})_{i\in F_{k}}$ are $S_{n_{j_{1}}+\cdots+n_{j_{l-1}}}\ast\mathcal{A}_{3}$-admissible for $1\leq k\leq M$
\item There exists a node $\lambda_{1}\in\overline{\mathcal{D}}$ such that $(f_{i})_{i\in F_{1}}$ is $(n_{j_{1}},\ldots,n_{j_{l-1}},\lambda_{1})_{\delta}$-packed
\item $d(m(f_{i}),\lambda)\leq\delta$ for $f_{i}\in\left(\bigcup_{k=2}^{M}F_{k}\right)\setminus W^{\text{aux},\delta}_{0}$
\end{enumerate}
\end{definition}

It is worth noting that a subset of $(n_{j_{1}},\ldots,n_{j_{l}},\lambda)_{\delta}$-packed functionals in $W^{\text{aux},\delta}_{1}$ is still $(n_{j_{1}},\ldots,n_{j_{l}},\lambda)_{\delta}$-packed, and we now define the next set of auxiliary weighted functionals
\bes W^{\text{aux},\delta}_{2}=W^{\text{aux},\delta}_{1}\cup\left\{ \frac{1}{m_{j_{1}}\cdots m_{j_{l}}}\sum_{i=1}^{d}f_{i} \;\middle\vert\; \substack{ f_{1}<\ldots<f_{d} \text{ are in } W^{\text{aux},\delta}_{1} \text{ and are} \\  (n_{j_{1}},\ldots,n_{j_{l}},\lambda)_{\delta}\text{-packed for some } \lambda\in\overline{\mathcal{D}}} \right\} \ees
with $w(f)=m_{j_{1}}\cdots m_{j_{l}}$, $m(f)=\max_{1\leq z\leq l}m_{j_{z}}$, and $\vec w(f) = (m_{j_1},\ldots,m_{j_\ell})$. Now, once we have defined the sets $W^{\text{aux},\delta}_{0},W^{\text{aux},\delta}_{1},\ldots,W^{\text{aux},\delta}_{k}$, we replace $W^{\text{aux},\delta}_{1}$ in Definitions \ref{delta_packed} and \ref{big_delta_packed} with $W^{\text{aux},\delta}_{k}$ and we write that
\bes W^{\text{aux},\delta}_{k+1}=W^{\text{aux},\delta}_{k}\cup\left\{ \frac{1}{m_{j_{1}}\cdots m_{j_{l}}}\sum_{i=1}^{d}f_{i} \;\middle\vert\; \substack{ f_{1}<\ldots<f_{d} \text{ are in } W^{\text{aux},\delta}_{k} \text{ and are} \\  (n_{j_{1}},\ldots,n_{j_{l}},\lambda)_{\delta}\text{-packed for some } \lambda\in\overline{\mathcal{D}}} \right\} \ees
for each $k\in\N$. We then define our norming set $W^{\text{aux},\delta}=\bigcup_{k=0}^{\infty}W^{\text{aux},\delta}_{k}$ and we let the Banach space $X_{\mathcal{D}}^{\text{aux},\delta}$ be the completion of $c_{00}$ with respect to the norm that is given by $\|x\|_{W^{\text{aux},\delta}}=\sup\{f(x) \mid f\in W^{\text{aux},\delta}\}$ for $x\in c_{00}$. We will need the following lemma before we can prove the main result of this section.

\begin{lemma}{\label{aux_lemma}}
Let $f=\frac{1}{m_{j_{1}}\cdots m_{j_{l}}}\sum_{i=1}^{d}f_{i}\in W^{\text{aux},\delta}$. Then, $\{ i \mid f_{i}\notin W^{\text{aux},\delta}_{0}\}=E_{0}\cup E_{1}\cup\ldots\cup E_{l}$ where $\mathrm{card}(E_0)\leq 1$ and, for each $1\leq z\leq l$, $(f_{i})_{i\in E_{z}}$ is $S_{n_{j_{1}}+\cdots+n_{j_{z}}}\ast\mathcal{A}_{3}$-admissible and $\max_{i\in E_{z}}d_{T}(m(f_{i}),\lambda_{z})\leq\delta$ for some $\lambda_{z}\in\overline{\mathcal{D}}$.
\end{lemma}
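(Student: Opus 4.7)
My plan is to proceed by induction on the length $l$ of the weight tuple $\vec w(f) = (m_{j_1},\ldots,m_{j_l})$, exploiting the recursive structure built into Definition \ref{big_delta_packed}. The intuition is that the partition $F_1 < \cdots < F_M$ appearing in the definition of $(n_{j_1},\ldots,n_{j_l},\lambda)_{\delta}$-packed already distinguishes a first block $F_1$, which is itself $(n_{j_1},\ldots,n_{j_{l-1}},\lambda_1)_{\delta}$-packed, from the remaining blocks $F_2,\ldots,F_M$, whose non-basis functionals all lie within $\delta$ of the ambient node $\lambda$. Hence the inductive hypothesis applied to $F_1$ naturally furnishes $E_0,E_1,\ldots,E_{l-1}$ with the required properties, while $\bigcup_{k \geq 2} F_k$ produces $E_l$ with $\lambda_l := \lambda$. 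The singular set $E_0$ persists through the recursion without ever accumulating more than one element because the ``first functional exception'' allowed by Definition \ref{delta_packed} is a single-index phenomenon that is absorbed from one level to the next. For the base case $l = 1$, where $f_1 < \cdots < f_d$ are $(n_{j_1},\lambda)_{\delta}$-packed, I would set $E_0 := \{1\}$ if $f_1 \notin W^{\text{aux},\delta}_0$ and $E_0 := \emptyset$ otherwise, and $E_1 := \{i : 2 \leq i \leq d,\, f_i \notin W^{\text{aux},\delta}_0\}$ with $\lambda_1 := \lambda$; the admissibility of $E_1$ is inherited as a subset from item (1) of Definition \ref{delta_packed}, and the proximity bound is item (3).

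For the inductive step from $l-1$ to $l$, I would unpack Definition \ref{big_delta_packed} to obtain a partition $F_1 < \cdots < F_M$ of $\{1,\ldots,d\}$ and a node $\lambda_1 \in \overline{\mathcal{D}}$ such that $(f_i)_{i \in F_1}$ is $(n_{j_1},\ldots,n_{j_{l-1}},\lambda_1)_{\delta}$-packed. Since this packed sequence yields a legitimate member $g := \frac{1}{m_{j_1}\cdots m_{j_{l-1}}}\sum_{i \in F_1} f_i$ of $W^{\text{aux},\delta}$, I apply the inductive hypothesis to $g$ and obtain a partition $E_0^{(1)} \cup E_1^{(1)} \cup \cdots \cup E_{l-1}^{(1)}$ of the non-basis indices in $F_1$ with $|E_0^{(1)}| \leq 1$, admissibility $(f_i)_{i \in E_z^{(1)}} \in S_{n_{j_1}+\cdots+n_{j_z}}\ast\mathcal{A}_3$, and proximity bound $\max_{i \in E_z^{(1)}} d(m(f_i),\lambda_z) \leq \delta$ for suitable $\lambda_z$. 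I then declare $E_0 := E_0^{(1)}$, $E_z := E_z^{(1)}$ for $1 \leq z \leq l-1$, and $E_l := \{i \in \bigcup_{k \geq 2} F_k : f_i \notin W^{\text{aux},\delta}_0\}$ with $\lambda_l := \lambda$. The proximity bound $d(m(f_i),\lambda) \leq \delta$ for $i \in E_l$ is immediate from item (4) of Definition \ref{big_delta_packed}, and $|E_0| \leq 1$ is inherited from the inductive step.

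The one step that requires genuine care is verifying that $E_l$ is $S_{n_{j_1}+\cdots+n_{j_l}}\ast\mathcal{A}_3$-admissible, and this I expect to be the main (though not deep) obstacle. The argument will hinge on the standard Schreier convolution identity $S_{n_{j_l}}[S_{n_{j_1}+\cdots+n_{j_{l-1}}}] \subseteq S_{n_{j_1}+\cdots+n_{j_l}}$. Specifically, item (2) of Definition \ref{big_delta_packed} says that each $(f_i)_{i \in F_k}$ is $S_{n_{j_1}+\cdots+n_{j_{l-1}}}\ast\mathcal{A}_3$-admissible and that $\{f_{\min F_k}\}_{k=1}^M$ is $S_{n_{j_l}}$-admissible. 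Concatenating the $\mathcal{A}_3$-decompositions of the $F_k$ for $k \geq 2$ exhibits $\bigcup_{k \geq 2} F_k$ as a union of $\mathcal{A}_3$-pieces whose minima form, by the convolution identity, a set in $S_{n_{j_1}+\cdots+n_{j_l}}$; passing to the subset of non-basis indices preserves this admissibility. Apart from this bookkeeping, the rest of the argument is a routine unpacking of the inductive definitions.
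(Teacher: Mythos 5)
Your proposal is correct and follows essentially the same route as the paper's proof: induction on the length $l$ of the weight tuple, setting $E_0 = \{1\}\setminus\{i: f_i\in W_0^{\text{aux},\delta}\}$ and $E_1 = \{2,\ldots,d\}\setminus\{i: f_i\in W_0^{\text{aux},\delta}\}$ in the base case, and in the inductive step peeling off the functional $\tilde g=\frac{1}{m_{j_1}\cdots m_{j_{l-1}}}\sum_{i\in F_1}f_i$ built on the first block $F_1$ to invoke the hypothesis, while $E_l$ is taken to be the non-basis indices in $\bigcup_{k\ge 2}F_k$ with $\lambda_l=\lambda$. The only point you spell out more explicitly than the paper is the verification that $E_l$ is $S_{n_{j_1}+\cdots+n_{j_l}}\ast\mathcal{A}_3$-admissible via the Schreier convolution $S_{n_{j_l}}[S_{n_{j_1}+\cdots+n_{j_{l-1}}}]\subseteq S_{n_{j_1}+\cdots+n_{j_l}}$ together with item (2) of Definition \ref{big_delta_packed}; the paper leaves this implicit.
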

\begin{proof}
We proceed by induction on the length $l$ of the tuple $(m_{j_{1}},\ldots,m_{j_{l}})$. If $l=1$, then $f=\frac{1}{m_{j_{1}}}\sum_{i=1}^{d}f_{i}\in W^{\text{aux},\delta}_{k}$ for some $k\geq 1$ and we put 
\begin{align*}  E_{0}&=\{1\}\setminus\{ i \mid f_{i}\in W^{\text{aux},\delta}_{0}\} \\ E_{1}&=\{2,\ldots,d\}\setminus \{i \mid f_{i}\in W^{\text{aux},\delta}_{0}\} \end{align*}
and note that, if $k=1$, then $E_{0}=E_{1}=\emptyset$ so that the result holds vacuously. Assume then that $k> 1$ and note that $f_{1}<\ldots<f_{d}$ are in $W^{\text{aux},\delta}_{k-1}$ and are $(n_{j_{1}},\lambda)_{\delta}$-packed for some $\lambda\in\overline{\mathcal{D}}$. It therefore follows by definition that $(f_{i})_{i\in E_{1}}$ are $S_{n_{j_{1}}}\ast\mathcal{A}_{3}$-admissible and that $\max_{i\in E_{1}}d(m(f_{i}),\lambda)\leq \delta$.

Assume now that the result holds for functionals of the form $f=\frac{1}{m_{j_{1}}\cdots m_{j_{l}}}\sum_{i=1}^{d}f_{i}\in W^{\text{aux},\delta}$ and let $g=\frac{1}{m_{j_{1}}\cdots m_{j_{l}}m_{j_{l+1}}}\sum_{i=1}^{d}g_{i}\in W^{\text{aux},\delta}$. As above, the result holds vacuously if $g\in W^{\text{aux},\delta}_{1}$, so assume that $g\in W^{\text{aux},\delta}_{k}$ for some $k>1$. In this case, we have that $g_{1}<\ldots<g_{d}$ are in $W^{\text{aux},\delta}_{k-1}$ and are $(n_{j_{1}},\ldots,n_{j_{l}},n_{j_{l+1}},\lambda)_{\delta}$-packed for some $\lambda\in\overline{\mathcal{D}}$. It follows that there is a partition $F_{1}<\ldots<F_{M}$ of $\{1,\ldots,d\}$ as in Definition \ref{big_delta_packed}. Note that in particular that $(g_{i})_{i\in F_{1}}$ are $(n_{j_{1}},\ldots,n_{j_{l}},\lambda_{1})_{\delta}$-packed for some $\lambda_{1}\in\overline{\mathcal{D}}$ so that the functional $\tilde{g}=\frac{1}{m_{j_{1}}\cdots m_{j_{l}}}\sum_{i\in F_{1}}g_{i}\in W^{\text{aux},\delta}$. Then, by the inductive hypothesis, there are sets $E_{0}\cup E_{1}\cup\ldots\cup E_{l}\subset F_{1}$ so that the required claim holds for $\tilde{g}$. Now, let $E_{l+1}=\left(\bigcup_{r=2}^{M}F_{r}\right)\setminus\{ i \mid g_{i}\in W^{\text{aux},\delta}_{0}\}$ and it follows that the required claim holds for $g$ itself so that the proof is complete.
\end{proof}

We now derive the aforementioned $c_{0}$-type estimate for auxiliary weighted functionals whose weights in some sense mismatch the weights of weighted $(n,\ep)$-basic scc vectors. First, we make a remark about the relation between the weight of a functional and the admissibility of its components.

\begin{remark}
\label{weight-to-admissibility condition}
Let $j\in\N$ and $f\in W$ such  $m(f)<m_j$ and $w(f) < m_j^4$, then, if $\vec w(f)= (m_{j_1},\ldots,m_{j_\ell})$, $n_{j_1}+\cdots +n_{j_\ell} < n_j$. Indeed, $m_{1}^{l}\leq w(f) <m_{j}^{4}$, and thus, $l<4\log(m_{j})/\log(100)$. By condition \ref{condition n_j} on page \pageref{condition n_j},
\[n_{j_{1}}+\cdots+n_{j_{l}}\leq ln_{j-1}< n_{j-1}4\frac{\log(m_{j})}{\log(100)}\leq n_{j}.\]
\end{remark}

\begin{proposition}{\label{c0_type_estimate}}
Let $N\in\N\cup\{0\}$ and choose $(t_{i})_{i=1}^{2^{N}}$ such that $d(m_{t_{i}},m_{t_{i'}})\geq 2^{-N}$ for each $i\neq i'$. Then, choose $\ep_{i}\in\left(0,\frac{1}{3m_{t_{i}}2^{N}}\right)$ and define a $(n_{t_{i}},\ep_{i})$-basic scc by
\bes x_{i}=m_{t_{i}}\sum_{r\in F_{i}}c_{r}^{i}e_{i_{r}} \ees
for each $1\leq i\leq 2^{N}$. It follows that $\left\vert f\left(\sum_{i\in\Omega}a_{i}x_{i}\right)\right\vert\leq \left(1+\frac{4}{\sqrt{w(f)}}\right)\max_{i\in\Omega}|a_{i}|$ for every subset $\Omega\subset\{1,\ldots,2^{N}\}$, for every choice $(a_{i})_{i\in\Omega}$ of scalars, and for every auxiliary weighted functional $f=\frac{1}{m_{j_{1}}\cdots m_{j_{l}}}\sum_{q=1}^{d}f_{q}\in W^{\text{aux},\delta}$ where $\delta<2^{-(N+1)}$.
\end{proposition}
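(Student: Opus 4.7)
The strategy is to exploit the metric separation of the $m_{t_i}$'s in conjunction with the cluster structure of $f$ provided by Lemma~\ref{aux_lemma}. Applying that lemma partitions the non-basic components of $f$ as $E_0\cup E_1\cup\cdots\cup E_\ell$, where $\mathrm{card}(E_0)\leq 1$ and, for each $1\leq z\leq\ell$, the functionals $(f_q)_{q\in E_z}$ are $S_{n_{j_1}+\cdots+n_{j_z}}\ast\mathcal{A}_3$-admissible with $\max_{q\in E_z}d(m(f_q),\lambda_z)\leq\delta$ for some $\lambda_z\in\overline{\mathcal{D}}$. The crucial consequence of the hypothesis $d(m_{t_i},m_{t_{i'}})\geq 2^{-N}$ together with $\delta<2^{-(N+1)}$ is that each closed $\delta$-ball in $\overline{\mathcal{D}}$ contains at most one of the nodes $\phi(m_{t_i})$; in particular, for each $z$ there is at most one index $i_z^*\in\{1,\ldots,2^N\}$ with $d(m_{t_{i_z^*}},\lambda_z)\leq\delta$, and for every other $i\neq i_z^*$ and every $q\in E_z$ we have $m(f_q)\neq m_{t_i}$.

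With this setup in hand, I would expand
\[
\Big|f\Big(\sum_{i\in\Omega}a_i x_i\Big)\Big| \leq \frac{1}{w(f)}\sum_q \sum_{i\in\Omega}|a_i|\,|f_q(x_i)|
\]
and separate the double sum into a \emph{match} part (pairs $(q,i)$ with $q\in E_z$ and $i=i_z^*$, along with the at most one pair involving $q\in E_0$) and a \emph{mismatch} part (pairs $(q,i)$ with $q\in E_z$ and $i\neq i_z^*$). The match part is bounded crudely using $\|f_q\|_\infty\leq 1$ and $\|x_{i_z^*}\|_1 = m_{t_{i_z^*}}$; after dividing by $w(f)$, the resulting contribution is of order $\max_i|a_i|$ and yields the leading ``$1$'' in the target estimate.

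For the mismatch part, the admissibility of $(f_q)_{q\in E_z}$ combined with Remark~\ref{weight-to-admissibility condition} ensures, in the regime where $w(f_q)$ is not too large relative to $m_{t_i}$, that the admissibility exponent $n_{j_1}+\cdots+n_{j_z}$ remains strictly below $n_{t_i}$. The $(n_{t_i},\varepsilon_i)$-basic scc property of $x_i$ then gives
\[
\sum_{q\in E_z}|f_q(x_i)| \lesssim m_{t_i}\cdot 3\varepsilon_i < 2^{-N},
\]
by the choice $\varepsilon_i<1/(3m_{t_i}2^N)$. Summing over the at most $2^N$ mismatched indices $i$ and over $z\leq\ell$, and dividing by $w(f)$, should produce the small error term $4/\sqrt{w(f)}$.

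\textbf{Main obstacle.} The delicate case is when $w(f_q)$ is large enough that Remark~\ref{weight-to-admissibility condition} does not apply directly; here one should instead exploit the factor $1/w(f_q)$ intrinsic to $f_q$ to absorb the scale $m_{t_i}$. Balancing these two regimes — $w(f_q)<m_{t_i}^4$ versus $w(f_q)\geq m_{t_i}^4$ — to recover precisely the $\sqrt{w(f)}$ decay, rather than a cruder estimate, is the key technical step, and it will hinge on the lacunary growth conditions~\ref{condition m_j} and~\ref{condition n_j} imposed on the sequences $(m_j)$ and $(n_j)$, together with the nested cluster structure of $f$ itself.
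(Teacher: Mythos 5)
Your high-level intuition is on target: the metric separation $d(m_{t_i},m_{t_{i'}})\geq 2^{-N}$ against the $\delta$-clustering (with $2\delta<2^{-N}$) forces each cluster $E_z$ from Lemma~\ref{aux_lemma} to ``match'' at most one index $i_z^*$, and this is indeed the driving mechanism in the paper's argument. However, as written, your plan has a genuine gap in the \emph{mismatch} step, and it is precisely the gap that forces the paper's proof to be an induction on the level $k$ of the norming sets $W^{\text{aux},\delta}_k$.

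Concretely: for a mismatch pair $(q,i)$ with $q\in E_z$ and $i\neq i_z^*$, you propose to bound $\sum_{q\in E_z}|f_q(x_i)|$ using the $(n_{t_i},\varepsilon_i)$-basic scc property of $x_i$ together with the $S_{n_{j_1}+\cdots+n_{j_z}}\ast\mathcal{A}_3$-admissibility of $(f_q)_{q\in E_z}$. But this admissibility constrains only the locations $\{\min\mathrm{supp}(f_q)\}_{q\in E_z}$, not the full supports, and in particular says nothing about the internal structure of a \emph{single} nested $f_q$. If one $f_q$ alone covers much of $\mathrm{supp}(x_i)$, then $|f_q(x_i)|$ can be of order $m_{t_i}$ and the scc property gives no leverage. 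This is exactly why the paper's estimate on the cluster sums is not a scc computation but an invocation of a strengthened inductive hypothesis on the \emph{children} $f_q$: it proves, at each level $k$, the stronger bound $|f(\sum_{i\in A_f(\Omega)}a_ix_i)|<4\gamma/\sqrt{w(f)}$ for the mismatch indices $A_f(\Omega)=\{i:m_{t_i}\neq m(f)\}$, and then applies this to each non-basic $f_q$ to get $|f_q(\sum_{i\neq i_z}a_ix_i)|\leq 4\gamma/\sqrt{w(f_q)}$, which sums to $O(\gamma)$ after dividing by $w(f)$. Your ``main obstacle'' paragraph gestures toward the two regimes $w(f_q)\lessgtr m_{t_i}^4$, but Remark~\ref{weight-to-admissibility condition} and that dichotomy are deployed in the paper only for the \emph{whole} functional $f$ in the base case $W^{\text{aux},\delta}_1$ (the three-way split $A_1,A_2,A_3$ by comparing $m(f)$ to $m_{t_i}$ and $n_{j_1}+\cdots+n_{j_l}$ to $n_{t_i}$); this argument works precisely because there all components are coordinate functionals $e^*_{i_q}$. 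It does not transfer to nested $f_q$.

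Two smaller but real omissions: you give no mechanism for the components $f_q\in W^{\text{aux},\delta}_0$ (these fall outside every $E_z$, and the paper handles their contribution $g_3$ by rerunning the base-case argument), nor for the first one or two components $f_1,f_2$, which escape both the very-fast-growing and the $\delta$-proximity constraints in Definitions~\ref{delta_packed}--\ref{big_delta_packed} and are therefore treated separately via the inductive hypothesis. In short, the correct proof is organized as an induction on $k$ with a strengthened hypothesis carried along; your plan captures the cluster/separation insight behind the inductive step but cannot close without that inductive scaffolding.
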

\begin{proof}
Let $\Omega\subset\{1,\ldots,2^{N}\}$ be given. We will proceed by induction on the level $k$ norming sets $W^{\text{aux},\delta}_{k}$ and the inductive hypothesis will include that $\left\vert f\left(\sum_{i\in A_{f}(\Omega)}a_{i}x_{i}\right)\right\vert<\frac{4\gamma}{\sqrt{w(f)}}$ if $f\notin W_{0}^{\text{aux},\delta}$ where $\gamma=\max_{i\in\Omega}|a_{i}|$ and where $A_{f}(\Omega) :=\{i\in\Omega \mid m_{t_{i}}\neq m(f)\}$. The base case is the level $k=1$ norming set and we first let $f\in W^{\text{aux},\delta}_{0}$ so that   
\bes \left\vert f\left(\sum_{i\in\Omega}a_{i}x_{i}\right)\right\vert = \left\vert e^{*}_{i_{0}}\left(\sum_{i\in\Omega}a_{i}m_{t_{i}}\sum_{r\in F_{i}}c^{i}_{r}e_{r} \right)\right\vert \leq \max_{i\in\Omega}\ep_{i}m_{t_{i}}|a_{i}| < \max_{i\in\Omega}|a_{i}| \ees
for some $i_{0}\in\N$. It therefore remains to check functionals of the form
\bes f=\frac{1}{m_{j_{1}}\cdots m_{j_{l}}}\sum_{q=1}^{d}e^{*}_{i_{q}}\in W^{\text{aux},\delta}_{1}\setminus W^{\text{aux},\delta}_{0} \ees
and we write $A_{f}(\Omega)=\{ i\in\Omega \mid m_{t_{i}}\neq m(f)\}=A_{1}\cup A_{2}\cup A_{3}$ where
\begin{align*} A_{1}&=\{ i\in\Omega \mid m_{t_{i}}<m(f)\} \\ A_{2}&=\{ i\in\Omega \mid m(f)<m_{t_{i}}\text{ and } n_{j_{1}}+\cdots+n_{j_{l}}<n_{t_{i}}\} \\ A_{3}&=\{ i\in\Omega \mid m(f)<m_{t_{i}} \text{ and } n_{j_{1}}+\cdots+n_{j_{l}}\geq n_{t_{i}}\} \end{align*}
so that $\Omega\setminus A_{f}(\Omega)$ is at most a singleton $\{i_{0}\}$ with $m_{t_{i_{0}}}=m(f)$. Then, we have that
\bes \left\vert f\left(\sum_{i\in\Omega}a_{i}x_{i}\right)\right\vert\leq \sum_{p=1}^{3}\left\vert f\left(\sum_{i\in A_{p}}a_{i}x_{i}\right)\right\vert + |f(a_{i_{0}}x_{i_{0}})|\leq \sum_{p=1}^{3}\left\vert f\left(\sum_{i\in A_{p}}a_{i}x_{i}\right)\right\vert + \max_{i\in\Omega}|a_{i}| \ees
because $|f(x_{i_{0}})|\leq \frac{m_{t_{i_{0}}}}{w(f)}\leq 1$. Now, recall that $\gamma=\max_{i\in\Omega}|a_{i}|$ and note that, for $A_{1}$, there is the estimate
\begin{align*} \left\vert f\left(\sum_{i\in A_{1}}a_{i}x_{i}\right)\right\vert&\leq \sum_{i\in A_{1}}|f(a_{i}x_{i})|\leq \frac{\gamma}{w(f)}\sum_{i\in A_{1}}m_{t_{i}} \\ &\leq \frac{\gamma}{w(f)}\text{card}(A_{1})m_{t_{\max(A_{1})}} \leq \frac{\gamma}{w(f)}t_{\max(A_{1})}m_{t_{\max(A_{1})}} \\ &\leq \frac{\gamma}{w(f)}(\text{ind}(m(f))-1)m_{\text{ind}(m(f))-1} \leq \frac{\gamma m_{\text{ind}(m(f))}}{w(f)\sqrt{m_{\text{ind}(m(f))}}} \leq \frac{\gamma}{\sqrt{w(f)}}\end{align*}
where we have used the first condition on the sequence $(m_{j})$ and the fact that $t_{i}<\text{ind}(m(f)):=\max_{1\leq z\leq l}j_{z}$ for each $i\in A_{1}$. Next, for $A_{2}$, note that
\begin{align*} \left\vert f\left(\sum_{i\in A_{2}}a_{i}x_{i}\right)\right\vert &\leq \frac{\gamma}{w(f)}\sum_{i\in A_{2}}m_{t_{i}}\left\vert\sum_{q=1}^{d}e^{*}_{i_{q}}\left(\sum_{r\in F_{i}}c^{i}_{r}e_{r}\right) \right\vert \\ &\leq \frac{\gamma}{w(f)}\sum_{i\in A_{2}} 3m_{t_{i}}\ep_{i} \leq\frac{\gamma}{w(f)}\sum_{i\in A_{2}}\frac{1}{2^{N}}\leq\frac{\gamma}{w(f)}\leq\frac{\gamma}{\sqrt{w(f)}} \end{align*}
where we have used the observation that immediately precedes Proposition \ref{basic-scc-maximal} and the fact that $e^{*}_{i_{1}}<\ldots<e^{*}_{i_{d}}$ are $S_{n_{j_{1}}+\cdots+n_{j_{l}}}\ast\mathcal{A}_{3}$-admissible with $n_{j_{1}}+\cdots+n_{j_{l}}<n_{t_{i}}$ for each $i\in A_{2}$. Lastly, for $A_{3}$, note that 
\begin{align*} \left\vert f\left(\sum_{i\in A_{3}}a_{i}x_{i}\right)\right\vert &\leq \frac{\gamma}{w(f)}\sum_{i\in A_{3}}m_{t_{i}} \leq \frac{\gamma}{w(f)}|A_{3}|m_{t_{\max(A_{3})}} \\ &\leq\frac{\gamma m^{2}_{t_{\max(A_{3})}}}{w(f)}\leq \frac{\gamma\sqrt{w(f)}}{w(f)}=\frac{\gamma}{\sqrt{w(f)}}\end{align*}
because $m_{t_{i}}^{2}\leq\sqrt{w(f)}$ for each $i\in A_{3}$, by Remark \ref{weight-to-admissibility condition}. It now follows by putting these pieces together that
\bes \left\vert f\left(\sum_{i\in\Omega}a_{i}x_{i}\right)\right\vert \leq \gamma+\frac{3\gamma}{\sqrt{w(f)}}\ees
for each $f\in W^{\text{aux},\delta}_{1}$ and, in particular, we have that $\left\vert f\left(\sum_{i\in A_{f}(\Omega)}a_{i}x_{i}\right)\right\vert < \frac{4\gamma}{\sqrt{w(f)}}$ as is required as part of the inductive hypothesis.

Assume now that the statement of Proposition \ref{c0_type_estimate} holds for every functional $f\in W^{\text{aux},\delta}_{k}$, let $\Omega\subset\{1,\ldots,2^{N}\}$ be given, and let 
\bes f=\frac{1}{m_{j_{1}}\cdots m_{j_{l}}}\sum_{q=1}^{d}f_{q}\in W^{\text{aux},\delta}_{k+1}\setminus W^{\text{aux},\delta}_{k} \ees
where $f_{q}\in W^{\text{aux},\delta}_{k}$ for each $1\leq q\leq d$. Define the sets $B=\{q \mid f_{q}\in W^{\text{aux},\delta}_{0}\}$ and $C=\{3,\ldots,d\}\setminus B$. We may then write $f=\sum_{\alpha=1}^{4}g_{\alpha}$ where
\bes g_{1}=\frac{1}{w(f)}f_{1},\text{ }g_{2}=\frac{1}{w(f)}f_{2},\text{ }g_{3}=\frac{1}{w(f)}\sum_{q\in B}f_{q}, \text{ and } g_{4}=\frac{1}{w(f)}\sum_{q\in C}f_{q} \ees
and it follows that
\bes \left\vert f\left(\sum_{i\in\Omega}a_{i}x_{i}\right)\right\vert \leq \left\vert f\left(\sum_{i\in A_{f}(\Omega)}a_{i}x_{i}\right)\right\vert + |f(a_{i_{0}}x_{i_{0}})| \leq \max_{i\in\Omega}|a_{i}|+\left\vert f\left(\sum_{i\in A_{f}(\Omega)}a_{i}x_{i}\right)\right\vert \ees
where, as before, $A_{f}(\Omega)=\{ i\in\Omega \mid m_{t_{i}}\neq m(f)\}$ so that $\Omega\setminus A_{f}(\Omega)$ is at most a singleton $\{i_{0}\}$ with $m_{t_{i_{0}}}=m(f)$, in which case, $|f(x_{i_{0}})|\leq \frac{m_{t_{i_{0}}}}{w(f)}\leq 1$. Let $\gamma=\max_{i\in\Omega}|a_{i}|$ as before and it now remains to estimate
\begin{align*} \left\vert f\left(\sum_{i\in A_{f}(\Omega)}a_{i}x_{i}\right)\right\vert&\leq \sum_{\alpha=1}^{4}\left\vert g_{\alpha}\left(\sum_{i\in A_{f}(\Omega)}a_{i}x_{i}\right)\right\vert \\ &\leq\frac{1}{w(f)}\left\vert f_{1}\left(\sum_{i\in A_{f}(\Omega)}a_{i}x_{i}\right)\right\vert + \frac{1}{w(f)}\left\vert f_{2}\left(\sum_{i\in A_{f}(\Omega)}a_{i}x_{i}\right)\right\vert  \\  &\qquad + \frac{1}{w(f)}\left\vert \sum_{q\in B}f_{q}\left(\sum_{i\in A_{f}(\Omega)}a_{i}x_{i}\right)\right\vert + \frac{1}{w(f)}\left\vert\sum_{q\in C}f_{q}\left(\sum_{i\in A_{f}(\Omega)}a_{i}x_{i}\right)\right\vert  \end{align*} 
where, after applying the inductive hypothesis to the $g_{1}$ and $g_{2}$ terms and using the same argument as is used in the base case for the $g_{3}$ term, this quantity is bounded above by
\begin{align*} &\frac{\gamma}{w(f)}\left(1+\frac{3}{\sqrt{w(f_{1})}}\right)+\frac{\gamma}{w(f)}\left(1+\frac{3}{\sqrt{w(f_{2})}}\right) +\frac{3\gamma}{\sqrt{w(f)}}+\frac{1}{w(f)}\left\vert \sum_{q\in C}f_{q}\left(\sum_{i\in A_{f}(\Omega)}a_{i}x_{i}\right)\right\vert \\  &\qquad\leq \frac{\frac{13}{5}\gamma}{w(f)}+\frac{3\gamma}{\sqrt{w(f)}}+\frac{1}{w(f)}\left\vert\sum_{q\in C}f_{q}\left(\sum_{i\in A_{f}(\Omega)}a_{i}x_{i}\right)\right\vert \\ &\qquad\leq \frac{(3+\frac{13}{50})\gamma}{\sqrt{w(f)}}+ \frac{1}{w(f)}\left\vert\sum_{q\in C}f_{q}\left(\sum_{i\in A_{f}(\Omega)}a_{i}x_{i}\right)\right\vert \end{align*}
because $m_{1}\geq 100$. Now, note that there are sets $E_{0}\cup E_{1}\cup\ldots\cup E_{l}$ from Lemma \ref{aux_lemma} such that $\{q \mid f_{q}\in \mathcal{C}\}=E_{0}\cup E_{1}\cup\ldots\cup E_{l}$ and such that, for each $1\leq z\leq l$, we have that $(f_{q})_{q\in E_{z}}$ is $S_{n_{j_{1}}+\cdots+n_{j_{z}}}\ast\mathcal{A}_{3}$-admissible and $\max_{q\in E_{z}}d(m(f_{q}),\lambda_{z})\leq\delta$ for some $\lambda_{z}\in\overline{\mathcal{D}}$. Next, let $\Gamma_{z}=\{ i\in A_{f}(\Omega) \mid m_{t_{i}}=m(f_{q}) \text{ for some } q\in E_{z}\}$ for each $1\leq z\leq l$ and note that, if $i_{1},i_{2}\in \Gamma_{z}$, then we have that
\begin{align*} 2^{-N} &\leq d(m_{t_{i_{1}}},m_{t_{i_{2}}}) \\ &= d(m(f_{q_{1}}),m(f_{q_{2}}))\leq d(m(f_{q_{1}}),\lambda_{z})+d(\lambda_{z},m(f_{q_{2}})) \leq 2\delta \end{align*}
for the corresponding $q_{1},q_{2}\in E_{z}$, and this cannot happen. It follows that $\Gamma_{z}$ is at most a singleton, say $i_{z}$, for each $1\leq z\leq l$ so that
\begin{align*}\left\vert \sum_{q\in\mathcal{C}}f_{q}\left(\sum_{i\in A_{f}(\Omega)}a_{i}x_{i}\right)\right\vert &\leq \sum_{z=0}^{l} \sum_{q\in E_{z}} \left\vert f_{q}\left(\sum_{i \in A_{f}(\Omega)\setminus\{i_{z}\}}a_{i}x_{i}\right)\right\vert + \sum_{z=0}^{l}\sum_{q\in E_{z}}|f_{q}(a_{i_{z}}x_{i_{z}})| \\ &\leq \sum_{z=0}^{l}\sum_{q\in E_{z}}\frac{4\gamma}{\sqrt{w(f_{q})}} + \gamma\sum_{z=0}^{l}\sum_{q\in E_{z}}\frac{m_{t_{i_{z}}}}{w(f_{q})} \\ &\leq (l+1)\gamma + \sum_{z=0}^{l}\sum_{q\in E_{z}}\frac{4\gamma}{\sqrt{w(f_{q})}} \leq (l+5)\gamma  \end{align*}
where again we have used the inductive hypothesis and, by putting all of these pieces together, there is the estimate
\begin{align*} \left\vert f\left(\sum_{i\in\Omega}a_{i}x_{i}\right)\right\vert &\leq \gamma+\frac{(3+\frac{13}{50})\gamma}{\sqrt{w(f)}}+\frac{(l+5)\gamma}{w(f)} \\ &\leq \gamma+\frac{(3+\frac{13}{50})\gamma}{\sqrt{w(f)}}+\frac{(l+5)\gamma}{10^{l}\sqrt{w(f)}} \leq \gamma + \frac{(3+\frac{13}{50}+\frac{3}{5})\gamma}{\sqrt{w(f)}} \leq \gamma+ \frac{4\gamma}{\sqrt{w(f)}}\end{align*}
where, in particular, $\left\vert f\left(\sum_{i\in A_{f}(\Omega)}a_{i}x_{i}\right)\right\vert\leq \frac{4\gamma}{\sqrt{w(f)}} $ so that the proof is complete.
\end{proof}

\section{Rapidly increasing sequences and basic inequality}{\label{sec5}}

Rapidly increasing sequences (RISs) and the basic inequality are standard concepts used in most Banach spaces constructions using saturated norms, such as $X_\mathcal{D}$. The main utility of such sequences is that they can be found in every infinite-dimensional subspace of $X_\mathcal{D}$ and, via the basic inequality, the norm of their linear combinations is bounded above by the same linear combinations of basis vectors in the auxiliary space $X_\mathcal{D}^{\mathrm{aux},\delta}$. RISs first appeared in \cite{schlumprecht:1991} and the basic inequality has its roots in \cite{argyros:deliyanni:1997}. These tools will be essential in Section \ref{sec6} when we construct, in an arbitrary infinite-dimensional block subspace of $X_\mathcal{D}$, a tree of normalized vectors that witness the failure of the Lebesgue property.

We begin with a short technical lemma and we recall that $\supp(0)=\emptyset$ and that $\emptyset < A$ and $A<\emptyset$ for every subset $A\subset\N$.

\begin{lemma}{\label{basic_lemma}}
Fix $\delta>0$ and assume that $f_{1}<\ldots<f_{d}\in W$ with $2^{-\max\supp(f_{1})}\leq\frac{\delta}{2}$ are $(n_{j_{1}},\ldots,n_{j_{l}},\lambda)$-packed for some $\lambda\in\overline{\mathcal{D}}$. Next, suppose that there exist sets of functionals $\mathcal{F}_{1},\ldots,\mathcal{F}_{d}$ such that, for each $1\leq q\leq d$, $\mathcal{F}_{q}=\{g^{q}_{1},g^{q}_{2},g^{q}_{3}\}$ with $g^{q}_{r}\in \{0\}\cup W^{\text{aux},\delta}$, and:
\begin{enumerate}
\item $\cup_{r=1}^{3}\supp(g^{q}_{r})\subset\supp(f_{q})$ and $\supp(g^{q}_{1})<\supp(g^{q}_{2})<\supp(g^{q}_{3})$,
\item there is at most one $g^{q}_{\tilde{r}}\in\mathcal{F}_{q}$ such that $g^{q}_{\tilde{r}}$ is not a unit vector,
\item there is at most one $g^{q}_{r}\in\mathcal{F}_{q}$ whose support precedes the support of $g^{q}_{\tilde{r}}$, and
\item for $g^q_{\tilde r}\in \mathcal{F}_q$ that is not a unit vector,  $\vec w(g^{q}_{\tilde{r}})=\vec w(f_{q})$.
\end{enumerate}
Then, the functionals $(g^{q}_{r})_{r=1,q=1}^{3,d}$ are $(n_{j_{1}},\ldots,n_{j_{l}},\lambda)_{\delta}$-packed.
\end{lemma}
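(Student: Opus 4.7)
The proof proceeds by induction on $\ell$, the length of the tuple $(n_{j_1},\ldots,n_{j_\ell})$ of weight indices.

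For the base case $\ell = 1$, the hypothesis asserts that $f_1 < \cdots < f_d$ satisfy Definition \ref{packed}, and I would verify in turn each condition of Definition \ref{delta_packed} for the concatenated collection of $g$-functionals. The $S_{n_{j_1}} \ast \mathcal{A}_3$-admissibility follows from grouping each $\mathcal{F}_q$ into a block of size at most three whose minimum support is at least $\min\supp(f_q)$; spreadness of Schreier families then yields that the set of these minimums is in $S_{n_{j_1}}$. The weight-growth condition $w(g) \geq 2^{\max\supp(\text{position two earlier})}$ holds trivially for unit vectors in $W^{\text{aux},\delta}_0$, while for the non-unit $g^q_{\tilde r}$, condition (3) of the lemma implies that at most one member of $\mathcal{F}_q$ precedes it in the enumeration, so the position two earlier lies in $\mathcal{F}_{q-1}$ or before and its max-support is at most $\max\supp(f_{q-1})$, which is dominated by $w(f_q) = w(g^q_{\tilde r})$ by condition (4) and the original packing. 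For proximity, when $q \geq 2$ the chain
\[
d(m(g^q_{\tilde r}), \lambda) = d(m(f_q), \lambda) \leq 2^{-\max\supp(f_{q-1})} \leq 2^{-\max\supp(f_1)} \leq \delta/2
\]
handles it.

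For the inductive step with $\ell \geq 2$, I would invoke the partition $F_1 < \cdots < F_M$ of $\{1,\ldots,d\}$ and the nodes $\lambda_1,\ldots,\lambda_M$ witnessing the $(n_{j_1},\ldots,n_{j_\ell},\lambda)$-packed structure of $f_1 < \cdots < f_d$ through Definition \ref{big_packed}. For each $k$, the subsequence $(f_q)_{q \in F_k}$ is $(n_{j_1},\ldots,n_{j_{\ell-1}},\lambda_k)$-packed, so the inductive hypothesis applied to it with the associated $(\mathcal{F}_q)_{q \in F_k}$ produces a $(n_{j_1},\ldots,n_{j_{\ell-1}},\lambda_k)_\delta$-packed sub-concatenation of $g$-functionals. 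To assemble these blocks into the full $(n_{j_1},\ldots,n_{j_\ell},\lambda)_\delta$-packed sequence per Definition \ref{big_delta_packed}, I would verify: the $S_{n_{j_\ell}}$-admissibility of the set of minimal supports of the first $g$ in each block (inherited via spreadness); the $S_{n_{j_1}+\cdots+n_{j_{\ell-1}}} \ast \mathcal{A}_3$-admissibility within each block (furnished by the inductive conclusion); the weight growth across blocks; and the proximity $d(m(g^q_{\tilde r}), \lambda) \leq \delta$ for non-unit vectors landing in blocks $F_2,\ldots,F_M$, which follows from the triangle inequality combining the bounds $d(m(f_q), \lambda_k) \leq 2^{-\max\supp(f_{\max F_{k-1}})}$ and $d(\lambda_k, \lambda) \leq 2^{-\max\supp(f_{\max F_{k-1}})}$ provided by the original $W$-packing.

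The main obstacle I expect is the bookkeeping of positions of non-unit vectors within the concatenated sequence relative to unit-vector filler from the $\mathcal{F}_q$, specifically verifying that the weight-growth condition in the auxiliary packing (which references the position two earlier rather than one earlier, as in the $W$-packed setting) remains valid despite runs of interspersed unit vectors. The crucial structural fact that makes this work is condition (3) of the lemma: since at most one member of any $\mathcal{F}_q$ precedes its non-unit member $g^q_{\tilde r}$, the two-earlier position in the enumeration must lie strictly in some $\mathcal{F}_{q'}$ with $q' < q$, so the original packing estimate $w(f_q) \geq 2^{\max\supp(f_{q-1})}$ suffices for the aux-packing requirement.
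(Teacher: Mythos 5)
Your proof is correct and follows essentially the same route as the paper's: induction on $\ell$, with the base case verifying the three conditions of Definition \ref{delta_packed} (admissibility by spreading, the skipped weight-growth via the two-position shift, proximity by the chain through $2^{-\max\supp(f_1)}$) and the inductive step assembling the blocks of the partition from Definition \ref{big_packed} into the structure required by Definition \ref{big_delta_packed}, with proximity for blocks $F_k$, $k\geq 2$, obtained by the triangle inequality. The one stylistic difference is that you apply the inductive hypothesis to every block $F_k$, whereas the paper applies it only to $F_1$ (the only block for which Definition \ref{big_delta_packed} demands the recursive $\delta$-packed structure), handling the remaining blocks by the triangle inequality alone; your version does a bit of extra work but has the side benefit of giving the $S_{n_{j_1}+\cdots+n_{j_{\ell-1}}}\ast\mathcal{A}_3$-admissibility of each block directly from the inductive conclusion rather than leaving it implicit.
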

\begin{proof}
We proceed by induction on the length $l$ of the tuple $(m_{j_{1}},\ldots,m_{j_{l}})$. Let $l=1$ and assume that $f_{1}<\ldots<f_{d}\in W$ with $2^{-\max\supp(f_{1})}\leq\delta$ are $(n_{j_{1}},\lambda)$-packed for some $\lambda\in\overline{\mathcal{D}}$. Then, $(g^{q}_{r})_{r=1,q=1}^{3,d}\subset W^{\text{aux},\delta}_{k}$ for some $k\in\N$ are $S_{n_{j_{1}}}\ast\mathcal{A}_{3}$-admissible and we have that
\bes w(g^{q}_{\tilde{r}})=w(f_{q})\geq 2^{\max\supp(f_{q-1})}\geq 2^{\max\supp(\varphi)} \ees
for each $g^{q}_{\tilde{r}}\in\mathcal{F}_{q}$ with at least two preceding non-zero functionals, where $\varphi$ is the non-zero functional that is two before $g^{q}_{\tilde{r}}$. It follows that $(g^{q}_{r})_{r=1,q=1}^{3,d}$ are skipped very fast growing in the sense of Definition \ref{delta_packed}. Moreover, we have that
\bes d(m(g^{q}_{\tilde{r}}),\lambda)=d(m(f_{q}),\lambda) \leq 2^{-\max\supp(f_{q-1})} \leq 2^{-\max\supp(f_{1})}\leq\frac{\delta}{2}< \delta \ees
for $2\leq q\leq d$ so that $(g^{q}_{r})_{q=1,r=1}^{d,3}$ are $(n_{j_{1}},\lambda)_{\delta}$-packed.

Assume now that the result holds for tuples of length $l$ and let $f_{1}<\ldots<f_{d}\in W$ be $(n_{j_{1}},\ldots,n_{j_{l}},n_{j_{l+1}},\lambda)$-packed for some $\lambda\in\overline{\mathcal{D}}$. Let $\mathcal{F}_{1},\ldots,\mathcal{F}_{d}$ be sets of at most three auxiliary weighted functionals as in the statement of the lemma and let $F_{1}<\ldots<F_{M}$ be a partition of $\{1,\ldots,d\}$ as in Definition \ref{big_packed}. Note that $(f_{q})_{q\in F_{1}}$ are $(n_{j_{1}},\ldots,n_{j_{l}},\lambda_{1})$-packed for some $\lambda_{1}\in\overline{\mathcal{D}}$. It therefore follows from the inductive hypothesis that $(g^{q}_{r})_{r=1,q\in F_{1}}^{3}$ are $(n_{j_{1}},\ldots,n_{j_{;}},\lambda_{1})_{\delta}$-packed. Moreover, for each $g^{q}_{\tilde{r}}$ with $q\in F_{p}$ for $p>1$, we have that
\begin{align*} d(m(g^{q}_{\tilde{r}}),\lambda)&=d(m(f_{q}),\lambda) \leq d(m(f_{q}),\lambda_{p})+d(\lambda_{p},\lambda) \\ &\leq 2\cdot 2^{-\max\supp(f_{\max(F_{p-1})})}\leq 2\cdot 2^{-\max\supp(f_{1})}\leq \delta \end{align*}
and, because $(g^{q}_{r})_{q=1,r=1}^{d,3}$ are skipped very fast growing in the sense of Definition \ref{big_delta_packed} by the same reasoning that is used in the base case, it follows that these functionals are $(n_{j_{1}},\ldots,n_{j_{l}},n_{j_{l+1}},\lambda)_{\delta}$-packed so that the proof is complete.
\end{proof}

We now recall from \cite{argyros:motakis:2020} the notion of a rapidly increasing sequence (RIS).

\begin{definition}{\label{RIS}}
Let $C\geq 1$, let $I\subset\N$ be an interval, and let $(j_{i})_{i\in I}$ be a strictly increasing sequence of positive integers. Then, a block sequence $(x_{i})_{i\in I}$ is said to be a $(C,(j_{i})_{i\in I})$-RIS if the following three statements hold:
\begin{enumerate}
\item $\|x_{i}\|\leq C$ for each $i\in\N$.
\item $\max\supp(x_{i-1})<\sqrt{m_{j_{i}}}$ for each $i\in I\setminus\{\min(I)\}$.
\item $|f(x_{i})|\leq\frac{C}{w(f)}$ for each $i\in I$ and for every $f\in W$ with $w(f)<m_{j_{i}}$.
\end{enumerate}
\end{definition}

We recall the steps of a standard argument that shows that there exists a RIS in every block subspace of $X_\mathcal{D}$. For some of the details, we refer to specific parts of \cite{argyros:motakis:2020}. The proposition below is proved word-for-word as \cite[Proposition 6.3]{argyros:motakis:2020}, and it requires only two things: Theorem \ref{l1_spreading_model} (in place of \cite[Proposition 4.1]{argyros:motakis:2020}) and that, for every $C>1$, $m_j^{-1}C^{n_j}\to\infty$. The latter is true because, by condition \ref{condition n_j} on page \pageref{condition n_j},
\begin{align*} \log\left(\frac{C^{n_{j}}}{m_{j}}\right) >\frac{4\log(C)}{\log(100)}n_{j-1}\log(m_{j})-\log(m_{j})=\log(m_{j})\left(\frac{4\log(C)n_{j-1}}{\log(100)}-1\right). \end{align*}

\begin{proposition}
\label{scc existene in block subspace}
Let $Y$ be a block subspace of $X_\mathcal{D}$. Then, for every $n\in\mathbb{N}$, $\varepsilon>0$, and $\delta>0$ there exists a $(n,\varepsilon)$ special convex combination $x= \sum_{i=1}^mc_ix_i$ such that $\|x\| > 1/(1+\delta)$ and $x_1,\ldots,x_m$ are block vectors in $Y$ of norm at most one.
\end{proposition}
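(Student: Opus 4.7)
My plan is to argue by contradiction combined with an iterative decomposition of special convex combinations. Suppose, for contradiction, that the conclusion fails, so every $(n,\varepsilon)$-scc $x = \sum c_i x_i$ with $x_i \in Y$ and $\|x_i\|\leq 1$ satisfies $\|x\| \leq (1+\delta)^{-1}$. The strategy is to propagate this upper bound to iterated sccs at higher Schreier levels and then contradict the lower bound furnished by Theorem \ref{l1_spreading_model}.

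The iteration is the crux of the argument. For a suitable decreasing sequence of tolerances $\varepsilon_k > 0$, I would prove by induction on $k$ that every $(kn, \varepsilon_k)$-scc of block vectors in $Y$ of norm at most one has norm at most $(1+\delta)^{-k}$ (modulo a negligible error). The inductive step exploits the standard Schreier decomposition: every $F \in S_{kn}$ can be written as a union $F = \bigcup_p F_p$ with $\{\min F_p\} \in S_n$ and each $F_p \in S_{(k-1)n}$. Given a $(kn,\varepsilon_k)$-basic scc $(d_j)_{j\in F}$, set $\sigma_p = \sum_{j \in F_p} d_j$; then $(\sigma_p)_p$ is itself an $(n,\varepsilon_k)$-basic scc and, for indices $p$ with $\sigma_p$ not too small, the renormalized inner coefficients $(d_j/\sigma_p)_{j\in F_p}$ form an $((k-1)n,\varepsilon_{k-1})$-basic scc. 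The induction hypothesis then bounds $\|u_p\| = \bigl\|\sum_{j\in F_p}(d_j/\sigma_p) x_j\bigr\| \leq (1+\delta)^{-(k-1)}$, and applying the contradiction hypothesis to the outer scc $\sum_p \sigma_p u_p$ (after rescaling the $u_p$ to have norm at most one) yields the additional factor of $(1+\delta)^{-1}$, combining to $(1+\delta)^{-k}$. The contribution of partition indices with small $\sigma_p$ is absorbed into an error term by choosing $\varepsilon_k$ to decrease rapidly enough.

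To derive the contradiction, I would apply Theorem \ref{l1_spreading_model} to obtain a normalized block sequence $(x_r)_{r\in\Omega}$ in $Y$ satisfying the $\ell_1^+$-lower bound. Using Proposition \ref{basic-scc-maximal} on a maximal $S_{kn}$-subset of $\Omega$ with sufficiently large minimum, I produce a $(kn, \varepsilon_k)$-basic scc and the associated vector $x = \sum c_r x_r$. Since $S_{kn} \subset S_{n_{j_0}}$ whenever $kn \leq n_{j_0}$, Theorem \ref{l1_spreading_model} gives $\|x\| \geq 0.99/m_{j_0}$, while the iteration gives $\|x\| \leq (1+\delta)^{-k}$. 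Setting $C = (1+\delta)^{1/n} > 1$ and $k = \lfloor n_{j_0}/n \rfloor$, the inequality $(1+\delta)^{-k} < 0.99/m_{j_0}$ is essentially $m_{j_0} < 0.99 \cdot C^{n_{j_0}}$, which holds for $j_0$ sufficiently large by the hypothesis $m_j^{-1} C^{n_j} \to \infty$ that follows from condition \ref{condition n_j}.

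The main obstacle I anticipate is making the inductive step fully rigorous: controlling the error from partition elements with small $\sigma_p$ requires careful bookkeeping, and the $\varepsilon_k$ must decrease faster than the accumulated error grows across the $k$ steps. Once these technicalities are settled, the argument reduces to the scheme of \cite[Proposition 6.3]{argyros:motakis:2020}, with Theorem \ref{l1_spreading_model} playing the role of the $\ell_1^+$-lower bound used there.
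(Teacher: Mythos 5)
Your proposal is correct and reproduces the argument the paper relies on: the paper explicitly delegates this proposition to the proof of \cite[Proposition 6.3]{argyros:motakis:2020}, whose structure is exactly the contradiction-plus-iteration scheme you describe (assume the $(n,\varepsilon)$-scc bound $\theta = (1+\delta)^{-1}$, propagate to $(kn,\varepsilon_k)$-sccs via the Schreier decomposition $\mathcal{S}_{kn} = \mathcal{S}_n[\mathcal{S}_{(k-1)n}]$ with error control for small inner masses, then contradict the $\ell_1^+$ lower bound of Theorem \ref{l1_spreading_model} using the fact that $m_j^{-1}C^{n_j}\to\infty$). The technicalities you flag — rescaling the inner sccs to norm at most one, discarding and renormalizing away the low-mass partition blocks, and choosing $\varepsilon_k$ to decrease fast enough that the accumulated error stays small — are precisely the bookkeeping carried out in \cite[Proposition 6.3]{argyros:motakis:2020}, so your sketch matches the paper's route rather than offering a genuinely different one.
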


The next proposition is also proved verbatim as \cite[Proposition 6.4]{argyros:motakis:2020} and it uses Remark \ref{weight-to-admissibility condition}.

\begin{proposition}
\label{scc RIS property}
Let $j\in\N$, $\varepsilon>0$, and $x = \sum_{i=1}^mc_ix_i$ be an $(n_j,\varepsilon)$-special convex combination such that $\|x_i\|\leq 1$, $1\leq i\leq m$. Then, for every $f\in W$ such that $w(f)<m_j$,
\[|f(x)| \leq \frac{1+2w(f)\varepsilon}{w(f)}.\]
\end{proposition}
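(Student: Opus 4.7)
The plan is to split $f(x) = \sum_i c_i f(x_i)$ according to whether $x_i$ interacts with a single component of $f$ or with several. Write $f = \frac{1}{w(f)}\sum_{q=1}^{d}f_q$ with $\vec w(f) = (m_{j_1},\ldots,m_{j_\ell})$ and set $r_q = \min\supp(f_q)$ for $1\leq q\leq d$, as well as $s_i = \min\supp(x_i)$ and $t_i = \max\supp(x_i)$ for $1\leq i\leq m$. First, since $m(f)\leq w(f) < m_j$ and $w(f) < m_j < m_j^4$, Remark \ref{weight-to-admissibility condition} gives $n_{j_1}+\cdots+n_{j_\ell} < n_j$, so that $\{r_1,\ldots,r_d\}\in S_{n_{j_1}+\cdots+n_{j_\ell}}\subseteq S_{n_j - 1}$.

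Let $J = \{i : \supp(x_i)\text{ meets }\supp(f_q)\text{ for at least two values of }q\}$. For any $i\in J$, choose $q_1<q_2$ with $\supp(x_i)\cap\supp(f_{q_h})\neq\emptyset$; since $f_{q_1}<f_{q_2}$, one obtains $s_i\leq\max\supp(f_{q_1})<r_{q_2}\leq t_i$. Denoting the smallest such $q_2$ by $q(i)$, the successiveness of $(x_i)$ makes $i\mapsto q(i)$ strictly increasing, so $\{r_{q(i)}:i\in J\}\subseteq\{r_1,\ldots,r_d\}$ lies in $S_{n_j-1}$ by hereditarity. Because $r_{q(i)}\leq t_i$ for each $i\in J$, the set $\{t_i:i\in J\}$ is a spreading of $\{r_{q(i)}:i\in J\}$, and since Schreier families are closed under spreading to the right, $\{t_i:i\in J\}\in S_{n_j-1}$. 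Invoking the fact noted in Section \ref{sec2} that $\sum c_i e_{t_i}$ is a $(n_j, 2\varepsilon)$-basic scc, I conclude $\sum_{i\in J}c_i < 2\varepsilon$.

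For $i\notin J$ at most one $f_q$ meets $\supp(x_i)$, so $|f(x_i)|\leq w(f)^{-1}\|f_q\|\cdot\|x_i\|\leq w(f)^{-1}$, while on $J$ the crude bound $|f(x_i)|\leq\|f\|\cdot\|x_i\|\leq 1$ suffices. Combining these,
\[ |f(x)|\leq\sum_{i\in J}c_i + \frac{1}{w(f)}\sum_{i\notin J}c_i \leq 2\varepsilon + \frac{1}{w(f)} = \frac{1+2w(f)\varepsilon}{w(f)}. \]

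The principal subtlety is that Schreier families are closed only under shifting indices to the right, so one cannot directly transfer the $S_{n_j-1}$ membership of $\{r_{q(i)}:i\in J\}$ to the set of minima $\{s_i:i\in J\}$, which sits to the left. This is why the estimate must be routed through the maxima $t_i$, using that $\sum c_ie_{t_i}$ is still a basic scc with the slightly worse parameter $2\varepsilon$; this precisely accounts for the factor of $2$ in the stated bound.
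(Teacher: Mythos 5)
Your proof is correct and follows the standard argument that the paper cites as Proposition~6.4 of Argyros--Motakis (2020), which it says carries over verbatim. The two essential ingredients you use --- converting the weight bound $w(f)<m_j$ into $\mathcal{S}_{n_j-1}$-admissibility of $\{\min\supp(f_q)\}$ via Remark~\ref{weight-to-admissibility condition}, and bounding the mass on straddling indices through the maxima $t_i$ (where the $(n_j,2\varepsilon)$-basic scc lives, accounting for the factor $2$) --- are exactly the ingredients that proof rests on.
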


The desired conclusion follows immediately from Proposition \ref{scc existene in block subspace} and Proposition \ref{scc RIS property}.

\begin{corollary}
\label{RIS exist}
For every $C>1$ and in every block subspace $Y$ of $X_\mathcal{D}$ there exists a $(C,(j_i)_{i\in\N})$-RIS $(x_i)$, with $\|x_i\|\geq 1$, for $i\in\N$.
\end{corollary}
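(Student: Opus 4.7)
The plan is to construct the RIS inductively by repeatedly invoking Proposition \ref{scc existene in block subspace} in successively shifted tails of $Y$, with parameters carefully tuned so that Proposition \ref{scc RIS property} delivers the desired weight-dependent bound on $|f(x_i)|$.

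Fix $\delta = \sqrt{C}-1$, and plan to choose indices $j_1<j_2<\cdots$ together with vectors $x_i\in Y$ inductively. Suppose $x_1,\ldots,x_{i-1}$ have been chosen and set $N_{i-1}=\max\supp(x_{i-1})$ (with $N_0=0$). First I would pick $j_i>j_{i-1}$ with $\sqrt{m_{j_i}}>N_{i-1}$ (so condition (2) of Definition \ref{RIS} will automatically hold), and then set $\varepsilon_i = \delta/(2m_{j_i})$. The tail $Y_i:=\{y\in Y:\min\supp(y)>N_{i-1}\}$ is again a block subspace of $X_\mathcal{D}$, so Proposition \ref{scc existene in block subspace}, applied to $Y_i$ with parameters $n_{j_i}$, $\varepsilon_i$, $\delta$, produces an $(n_{j_i},\varepsilon_i)$-special convex combination $y_i=\sum_k c_k z_k$ of norm-at-most-one block vectors $z_k\in Y_i$ with $\|y_i\|>1/(1+\delta)$. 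Normalize by setting $x_i=y_i/\|y_i\|$, so that $\|x_i\|=1\geq 1$ and $\|x_i\|\leq C$, covering condition (1) of Definition \ref{RIS}.

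For condition (3), take any $f\in W$ with $w(f)<m_{j_i}$. Proposition \ref{scc RIS property} applied to $y_i$ gives
\[
|f(y_i)| \leq \frac{1+2w(f)\varepsilon_i}{w(f)}.
\]
Since $w(f)<m_{j_i}$ and $\varepsilon_i=\delta/(2m_{j_i})$, we have $2w(f)\varepsilon_i<\delta$, hence
\[
|f(x_i)|=\frac{|f(y_i)|}{\|y_i\|}\leq (1+\delta)\frac{1+2w(f)\varepsilon_i}{w(f)} < \frac{(1+\delta)^2}{w(f)}=\frac{C}{w(f)},
\]
which is exactly the RIS estimate.

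There is no real obstacle here; the argument is purely a bookkeeping combination of the two cited propositions. The only point requiring mild attention is ensuring, at each stage, that the supports march to the right so that (2) of Definition \ref{RIS} holds — this is handled automatically by working in the tail block subspace $Y_i$ and by choosing $j_i$ large enough that $\sqrt{m_{j_i}}$ exceeds the last coordinate used so far. Iterating produces the desired $(C,(j_i)_{i\in\N})$-RIS $(x_i)$ in $Y$ with $\|x_i\|=1$.
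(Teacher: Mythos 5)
Your proof is correct and takes essentially the approach the paper intends; the paper simply states that the corollary follows from Propositions \ref{scc existene in block subspace} and \ref{scc RIS property}, and your construction fills in exactly the expected bookkeeping (choosing $\delta = \sqrt{C}-1$, tuning $\varepsilon_i$ relative to $m_{j_i}$, normalizing the scc, and working in tails of $Y$ to enforce the support and very-fast-growing conditions). The only cosmetic remark is that the first inequality in your final chain is in fact strict since $1/\|y_i\| < 1+\delta$, but this does not affect the conclusion.
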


 We now prove the basic inequality that transfers the action of a weighted functional $f\in W$ to the action of auxiliary weighted functionals on appropriate unit vectors.

\begin{proposition}{\label{basic_inequality}}
Let $(x_{i})_{i\in I}$ be a $(C,(j_{i})_{i\in I})$-RIS with $2^{-\min\supp(x_{\min(I)})}\leq\delta$. Then, for every functional $f=\frac{1}{m_{j_{1}}\cdots m_{j_{l}}}\sum_{q=1}^{d}f_{q}\in W$, there exist functionals $h\in \{0\}\cup W^{\text{aux},\delta}_{0}$ and $g\in W^{\text{aux},\delta}$ with $\vec w(g)=\vec w(f)$ such that
\bes \left\vert f\left(\sum_{i\in I}a_{i}x_{i}\right)\right\vert \leq C\left(1+\frac{1}{\sqrt{m_{j_{\min(I)}}}}\right)\left\vert (h+g)\left(\sum_{i\in I}a_{i}e_{t_{i}}\right)\right\vert \ees
for every choice $(a_{i})_{i\in I}$ of scalars where $t_{i}=\max\supp(x_{i})$ for each $i\in I$.
\end{proposition}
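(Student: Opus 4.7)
The plan is to induct on the minimal $k$ with $f\in W_k$, with the inductive hypothesis stated uniformly over RISs and coefficients. The base case $k=0$ is immediate: if $f=\pm e_s^*$, then $s\in\supp(x_{i_0})$ for at most one $i_0\in I$, so taking $h=\pm e_{t_{i_0}}^*$ (or $h=0$) and $g=0$ yields $|f(\sum_i a_ix_i)|=|a_{i_0}(x_{i_0})_s|\leq C|a_{i_0}|$ since $\|x_{i_0}\|\leq C$.

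For the inductive step, write $f=w(f)^{-1}\sum_{q=1}^d f_q$ with each $f_q\in W_{k-1}$ and the sequence $(f_q)$ $(n_{j_1},\ldots,n_{j_\ell},\lambda)$-packed. The strategy is to build, for each $q$, a triple $\mathcal F_q=\{g_1^q,g_2^q,g_3^q\}$ of elements of $\{0\}\cup W^{\text{aux},\delta}$ satisfying the hypotheses of Lemma \ref{basic_lemma}, then set $g=w(f)^{-1}\sum_{q,r}g_r^q$ and take $h$ to be a single unit-vector error at the leftmost RIS vector $x_{\min(I)}$. The construction of each $\mathcal F_q$ proceeds by cases on how $\supp(f_q)$ meets the RIS. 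If $\supp(f_q)\subset\supp(x_i)$ for a unique $i$ and $w(f_q)<m_{j_i}$, then condition (3) of Definition \ref{RIS} yields $|f_q(x_i)|\leq C/w(f_q)$, and we take $g_2^q:=w(f_q)^{-1}e_{t_i}^*\in W_1^{\text{aux},\delta}$ (which has $\vec w(g_2^q)=\vec w(f_q)$), with $g_1^q=g_3^q=0$. If instead $w(f_q)\geq m_{j_i}$, we recursively apply the inductive hypothesis to $f_q$ on the one-vector RIS $(x_i)$, packaging the main output as $g_2^q$ and the resulting unit-vector error (if any) into $g_1^q$ or $g_3^q$. Finally, if $\supp(f_q)$ straddles two or more consecutive RIS supports, we partition $f_q$ along those supports, process each interior restriction by the previous two cases, and absorb the two extreme fragments into the unit-vector slots $g_1^q$ and $g_3^q$.

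Two verifications then remain. First, Lemma \ref{basic_lemma} should deliver that $(g_r^q)_{r,q}$ is $(n_{j_1},\ldots,n_{j_\ell},\lambda)_\delta$-packed once $2^{-\max\supp(f_1)}\leq \delta/2$ is checked; this follows from the hypothesis $2^{-\min\supp(x_{\min(I)})}\leq\delta$ after discarding any $f_q$'s supported strictly before $x_{\min(I)}$ (their contribution to $f(\sum_i a_ix_i)$ is zero). The weight-vector equality $\vec w(g_2^q)=\vec w(f_q)$ is built into each case, and the proximity to $\lambda\in\overline{\mathcal D}$ transfers because $m(g_2^q)=m(f_q)$. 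Second, the numerical estimate: the $C$ factor is the RIS constant, the $h$-term covers the leftmost boundary discrepancy, and the correction $1/\sqrt{m_{j_{\min(I)}}}$ absorbs the small residual contributions from the large-weight interior components, for which $w(f_q)\geq m_{j_{\min(I)}}$ and condition \ref{condition m_j} force a geometric sum of terms $C/w(f_q)$ to be negligible.

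The main obstacle will be the boundary bookkeeping: one must track carefully which fragment of $f_q$ enters which of the three slots of $\mathcal F_q$ so that each triple has at most one non-unit entry, the supports are ordered $\supp(g_1^q)<\supp(g_2^q)<\supp(g_3^q)$, and at most one slot precedes the non-unit entry. Once this is in place, the transfer of the $W$-fast-growth $w(f_q)\geq 2^{\max\supp(f_{q-1})}$ to the skipped fast-growth condition $w(g_i)\geq 2^{\max\supp(g_{i-2})}$ of Definition \ref{delta_packed} is consistent because each $f_q$ expands into at most three auxiliary functionals, so the ``one-step'' skip becomes at worst a ``three-step'' skip, which is still absorbed by the much larger weight gap guaranteed by the original packing.
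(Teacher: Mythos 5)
Your overall strategy --- induction on $k$, use of Lemma \ref{basic_lemma} to verify the $(n_{j_1},\ldots,n_{j_\ell},\lambda)_\delta$-packed condition, an $h$-term for boundary errors, and the $1/\sqrt{m_{j_{\min(I)}}}$ correction for low-index residuals --- is in the right spirit and is close to the paper's. However, the construction of the triples $\mathcal{F}_q$ has a real gap. You build each $\mathcal{F}_q$ by cases on how $\supp(f_q)$ meets the RIS supports; but consider the situation where several distinct components $f_{q_1},\ldots,f_{q_s}$ (with $s\geq 2$) are \emph{all} entirely contained in $\supp(x_i)$ for a single fixed $i$. For each such $q_j$, your Cases 1 and 2 produce a non-unit-vector output $g_2^{q_j}$ (either $w(f_{q_j})^{-1}e^*_{t_i}$ or the inductive output from the one-vector RIS $(x_i)$), all supported on the same singleton $\{t_i\}$. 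The concatenation $(g_r^q)_{q,r}$ then fails to be a block sequence, so $g$ is not a valid element of $W^{\text{aux},\delta}$; and even setting that aside, several triples $\mathcal{F}_{q_j}$ now carry a non-unit-vector entry, yet Lemma \ref{basic_lemma} permits at most one. This case is not covered by your Case 3 either, since $\supp(f_{q_j})$ does not straddle two RIS supports.

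The paper avoids this by organizing the partition over the RIS index $i$ rather than over $q$: after discarding $i\leq i_0$ (where $w(f)\geq m_{j_i}$) into a single $h$ placed at the position $t_{i_1}$ with $|a_{i_1}|=\max_{i\leq i_0}|a_i|$ --- note your choice of $h$ at $t_{\min(I)}$ would not suffice, since the bound over $i\leq i_0$ is compared to $|h(\sum a_ie_{t_i})|=|a_{i_1}|$, not $|a_{\min(I)}|$ --- it splits the remaining indices into $A$, where $\supp(x_i)$ meets at most one $f_q$, and $B$, where it meets at least two. The scenario above falls into $B$; there the entire $|f(x_i)|$ is bounded in one shot by $\frac{C}{w(f)}|a_i|$ via RIS condition (3) (applicable since $i>i_0$ gives $w(f)<m_{j_i}$) and is represented by a \emph{single} unit vector $\tilde{h}_i=\mathrm{sign}(a_i)e^*_{t_i}$, which is then assigned to the unique triple $\mathcal{F}_{q_i}$ with $q_i=\max\{q:\min\supp(f_q)\leq t_i\}$. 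Only for $q\in D$ (indices whose $I_q\subset A$ is non-empty) is the inductive hypothesis applied, to $f_q$ on the multi-vector sub-RIS $(x_i)_{i\in I_q}$, yielding the one non-unit entry $g_q$ of $\mathcal{F}_q$. Without this reorganization, the packing constraint of Lemma \ref{basic_lemma} cannot be met.
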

\begin{proof}
We  proceed by induction on the level $k$ norming sets $W_{k}$ and the base case is the level $k=0$ set $W_{0}=\{\pm e_{i}^{*} \mid i\in\N\}$. Indeed, if $f=e^{*}_{i_{0}}\in W_{0}$, then
\bes \left\vert e^{*}_{i_{0}}\left(\sum_{i\in I}a_{i}x_{i}\right)\right\vert \leq |a_{i_{1}}e^{*}_{i_{0}}(x_{i_{1}})| \leq C|a_{i_{1}}|= C|e^{*}_{t_{i_{1}}}(a_{t_{i_{1}}}e_{t_{i_{1}}})| \ees
where $i_{0}\in\supp(x_{i_{1}})$ so that the desired estimate holds with $h=\{0\}$ and with $g=e^{*}_{t_{i_{1}}}$. Note that, as part of the inductive hypothesis, we have that $\supp(h),\supp(g)\subset\{t_{i}\}_{i\in I}$ and that: either $h=0$ or $h\in W^{\text{aux},\delta}_{0}$, $\supp(h)<\min\supp(g)$.

Assume now that the desired estimate holds for functionals in the level $k$ norming set $W_{k}$ and consider the functional
\bes f=\frac{1}{m_{j_{1}}\cdots m_{j_{l}}}\sum_{q=1}^{d}f_{q}\in W_{k+1}\setminus W_{k} \ees
which is to say that $f_{1}<\ldots<f_{d}$ are in $W_{k}$ and are $(n_{j_{1}},\ldots,n_{j_{l}},\lambda)$-packed for some $\lambda\in\overline{\mathcal{D}}$. We omit if necessary the functionals $f_{q}$ that are below the support of $x_{\min(I)}$ and it then follows that 
\bes \frac{1}{\delta}\leq 2^{\min\supp(x_{\min(I)})}\leq 2^{\max\supp(f_{1})} \implies 2^{-\max\supp(f_{1})}\leq\delta \ees
and we will use this observation later. First, define the quantity $i_{0}$ by
\bes i_{0}=\max\{i\in I \mid m_{j_{1}}\cdots m_{j_{l}}\geq m_{j_{i}}\} \ees
if it exists. If there does exist such an $i_{0}$, then choose $i_{1}\in[\min(I),i_{0}]$ so that $|a_{i_{1}}|=\max\{|a_{i}|\}_{i=\min(I)}^{i_{1}}$ and define the functional $h=\text{sign}(a_{i_{1}})e^{*}_{t_{i_{1}}}\in W^{\text{aux},\delta}_{0}$. Note that, if $\min(I)=i_{0}=i_{1}$, then
\bes \left\vert f\left(\sum_{i\leq i_{0}}a_{i}x_{i}\right)\right\vert=|f(a_{i_{1}}x_{i_{1}})|\leq C|a_{i_{1}}|=Ch(a_{i_{1}}e_{t_{i_{1}}}) \ees
and, if $\min(I)<i_{0}$, then
\begin{align*} \left\vert f\left(\sum_{i\leq i_{0}}a_{i}x_{i}\right)\right\vert &\leq |a_{i_{0}}f(x_{i_{0}})|+\sum_{i<i_{0}}|a_{i}f(x_{i})| \leq C|a_{i_{0}}|+\sum_{i<i_{0}}\left\vert\frac{a_{i}}{w(f)}\sum_{q=1}^{d}f_{q}(x_{i})\right\vert \\ &\leq C|a_{i_{1}}|+\frac{|a_{i_{1}}|}{m_{j_{i_{0}}}}\sum_{i<i_{0}}\sum_{q=1}^{d}\left\vert f_{q}\left(\sum_{r\in\supp(x_{i})}b_{r}e_{r}\right)\right\vert \\ &\leq C|a_{i_{1}}|+\frac{|a_{i_{1}}|}{m_{j_{i_{0}}}}\sum_{i<i_{0}}\sum_{r\in\supp(x_{i})}|b_{r}| \leq C|a_{i_{1}}|+\frac{C|a_{i_{1}}|}{m_{j_{i_{0}}}}\max\supp(x_{i_{0}-1}) \\ &\leq C\left(1+\frac{1}{\sqrt{m_{j_{i_{0}-1}}}}\right)|a_{i_{1}}| \leq C\left(1+\frac{1}{\sqrt{m_{j_{\min(I)}}}}\right)h\left(\sum_{i\leq i_{0}}a_{i}e_{t_{i}}\right)\end{align*}
and if, on the other hand, there is no such number $i_{0}$, then the sum $\sum_{i\leq i_{0}}a_{i}x_{i}$ is empty so we take $h=0$.

Next, let $\tilde{I}=\{i\in I \mid i>i_{0}\}$ if such an $i_{0}$ exists and put $\tilde{I}=I$ otherwise. We then define the subsets
\begin{align*}
A&=\{ i\in\tilde{I} \mid \supp(x_{i})\cap\supp(f_{q})\neq\emptyset \text{ for at most one }q\} \\ I_{q}&=\{ i\in A \mid \supp(x_{i})\cap\supp(f_{q})\neq\emptyset\} \\ D&=\{q\mid I_{q}\neq\emptyset\} \\ B&=\tilde{I}\setminus A \end{align*}
so that the intervals $I_{q}$ are pairwise disjoint and $\supp(x_{i})$ intersects the support of at least two $f_{q}$ if $i\in B$. It follows by the inductive hypothesis that, for each $q\in D$, there exist functionals $h_{q}\in \{0\}\cup W^{\text{aux},\delta}_{0}$ and $g_{q}\in W^{\text{aux},\delta}$ with $\vec w(g_{q})=\vec w(f_{q})$ so that 
\bes \left\vert f_{q}\left(\sum_{i\in I_{q}}a_{i}x_{i}\right)\right\vert\leq  C\left(1+\frac{1}{\sqrt{m_{j_{\min(I)}}}}\right)\left\vert (h_{q}+g_{q})\left(\sum_{i\in I_{q}}a_{i}e_{t_{i}}\right)\right\vert \ees
where $\supp(h_{q}),\supp(g_{q})\subset\supp(f_{q})$ and we have that: either $h_{q}=0$ or $h_{q}\in\{e_{t_{i}}\}_{i\in I_{q}}$ and $t_{i}<\min\supp(g_{q})$. Now, replace $h_{q}+g_{q}$ by $\text{sign}\left[(h_{q}+g_{q})\left(\sum_{i\in I_{q}}a_{i}e_{t_{i}}\right)\right](h_{q}+g_{q})$ if necessary and note that
\begin{align*} \left\vert f\left(\sum_{i\in\tilde{I}}a_{i}x_{i}\right)\right\vert &\leq \sum_{i\in B}|f(x_{i})|+\frac{1}{w(f)}\sum_{q\in D}\left\vert f_{q}\left(\sum_{i\in I_{q}}a_{i}x_{i}\right)\right\vert  \\ &\leq\frac{C}{w(f)}\sum_{i\in B}|a_{i}|+C\left(1+\frac{1}{\sqrt{m_{j_{\min(I)}}}}\right)\frac{1}{w(f)}\sum_{q\in D}(h_{q}+g_{q})\left(\sum_{i\in I_{q}}a_{i}e_{t_{i}}\right) \\ &\leq C\left(1+\frac{1}{\sqrt{m_{j_{\min(I)}}}}\right)\frac{1}{w(f)}\left[\sum_{i\in B}\tilde{h}_{i}+\sum_{q\in D}(h_{q}+g_{q}) \right]\left(\sum_{i\in\tilde{I}}a_{i}e_{t_{i}}\right)\end{align*}
where we have used the third condition from Definition \ref{RIS} and where $\tilde{h}_{i}=\text{sign}(a_{i})e_{t_{i}}^{*}$ for each $i\in B$. It therefore follows that
\bes \left\vert f\left(\sum_{i\in I}a_{i}x_{i}\right)\right\vert \leq C\left(1+\frac{1}{\sqrt{m_{j_{\min(I)}}}}\right)\left[(h+g)\left(\sum_{i\in I}a_{i}e_{t_{i}}\right)\right] \ees
where: either $h=0$ or $h\in\{e_{t_{i}}\}_{i=\min(I)}^{i_{0}}$ and $\supp(h)=t_{i_{1}}<\min\supp(g)$. Indeed, we have that
\bes g=\frac{1}{w(f)}\left(\sum_{i\in B}\tilde{h}_{i}+\sum_{q\in D}(h_{q}+g_{q})\right) \ees
so that $\vec w(f)=\vec w(g)$. It remains to prove that $g\in W^{\text{aux},\delta}$. To that end, define for each $i\in B$ the quantity $q_{i}=\max\{1\leq q\leq d \mid \min\supp(f_{q})\leq \max\supp(x_{i})=t_{i}\}$ and note that the correspondence $i\mapsto q_{i}$ is strictly increasing. If $q=q_{i}$ for some $i\in B$, then write $\mathcal{F}_{q}=\{h_{q},g_{q},\tilde{h}_{i}\}$. If $q\neq q_{i}$ for any $i\in B$, then write $\mathcal{F}_{q}=\{h_{q},g_{q}\}$. Note that $\mathcal{F}_{q}$ may consist only of $\tilde{h}_{i}$ or may be empty if $q\notin D$. It therefore follows from Lemma \ref{basic_lemma} (applied to $(f_{q})_{q\in D}$) that $g\in W^{\text{aux},\delta}$ and this completes the proof.
\end{proof}

\section{The failure of the Lebesgue property in subspaces of $X_\mathcal{D}$}{\label{sec6}}
In this section, we put together the results of the previous two sections in order to prove that every subspace of $X_{\mathcal{D}}$ contains a normalized collection $(y_\lambda)_{\lambda\in\mathcal{D}}$ that satisfies the estimates \eqref{intro ellinfty estimate}, and thus, witnesses the failure of the Lebesgue property.

\begin{theorem}{\label{main_result}}
Let $Y$ be an infinite-dimensional block subspace of $X_\mathcal{D}$. Then, there exists a normalized collection $(y_\lambda)_{\lambda\in\mathcal{D}}$ in $Y$ that is block with the lexicographical order of $\mathcal{D}$ such that the following holds: for every $N\in\N$ and $(\mu_\lambda)_{\lambda\in\{0,1\}^N}$ in $\mathcal{D}$ such that, for $\lambda\in\{0,1\}^N$, $\mu_\lambda\geq \lambda$ and for any scalars $(a_\lambda)_{\lambda\in\{0,1\}^N}$,
\[\max_{\lambda\in\{0,1\}^N}|a_\lambda|\leq \Big\|\sum_{\lambda\in\{0,1\}^N}a_\lambda y_{\mu_{\lambda}}\Big\| \leq 3\max_{\lambda\in\{0,1\}^N}|a_\lambda|.\]
In particular, the Lebesgue property fails in every infinite-dimensional closed subspace of $X_{\mathcal{D}}$.
\end{theorem}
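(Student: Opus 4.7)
Following the outline in the introduction, my plan is to build each $y_\lambda$ as a normalized ``exact vector of weight $m_{j_\lambda}=\phi^{-1}(\lambda)$''. I would first apply Corollary \ref{RIS exist} together with a subsequence extraction as in Theorem \ref{l1_spreading_model} to fix in $Y$ a normalized $(C,(j_i)_{i\in\N})$-RIS $(x_i)$ with $C$ close to $1$ and such that the $\ell_1$ lower estimate of Theorem \ref{l1_spreading_model} applies to $(x_i)$. Enumerating $\mathcal{D}\setminus\{\emptyset\}$ by $\phi$ as $\phi(m_1),\phi(m_2),\ldots$, for each $\lambda$ I would reserve a contiguous, sufficiently late, $S_{n_{j_\lambda}}$-maximal block $F_\lambda\subset\N$ of RIS indices, disjoint from and after every previously reserved $F_{\phi(m_k)}$. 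Proposition \ref{basic-scc-maximal} then produces coefficients $(c_i^\lambda)_{i\in F_\lambda}$ making $\tilde y_\lambda:=\sum_{i\in F_\lambda}c_i^\lambda x_i$ into an $(n_{j_\lambda},\varepsilon_\lambda)$-scc with $\varepsilon_\lambda=3/\min F_\lambda$. By enlarging $\min F_\lambda$, I would enforce the two smallness conditions
\[
\text{(i)}\quad \varepsilon_\lambda<(6m_{j_\lambda}2^{|\lambda|})^{-1},\qquad\text{(ii)}\quad 2^{-\min\supp(x_{\min F_\lambda})}<2^{-(|\lambda|+1)}.
\]
Theorem \ref{l1_spreading_model} gives $\|\tilde y_\lambda\|\ge 0.99/m_{j_\lambda}$, so $y_\lambda:=\tilde y_\lambda/\|\tilde y_\lambda\|$ is well-defined and normalized; I take $y_\emptyset$ to be any normalized vector preceding $y_{\phi(m_1)}$. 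The lower bound $\max_\lambda|a_\lambda|\le\|\sum_\lambda a_\lambda y_{\mu_\lambda}\|$ is immediate from the support-projection property guaranteed by the $1$-unconditionality of $(e_i)$ (Remark \ref{unconditionality and the like}).

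\textbf{Upper bound.} Fix $N\in\N$ and $(\mu_\lambda)_{\lambda\in\{0,1\}^N}$ with $\mu_\lambda\ge\lambda$, and write $J_\lambda$ for the index with $m_{J_\lambda}=\phi^{-1}(\mu_\lambda)$. Any two distinct $\lambda,\lambda'\in\{0,1\}^N$ share a common prefix of length at most $N-1$, and so do $\mu_\lambda$ and $\mu_{\lambda'}$, whence $d(\mu_\lambda,\mu_{\lambda'})\ge 2^{-(N-1)}>2^{-N}$. For $f\in W$, pick $\delta\in(2^{-\min\supp(x_{\min F})},2^{-(N+1)})$, where $\min F$ is the first RIS index contributing to $\sum_\lambda a_\lambda y_{\mu_\lambda}$; condition (ii) makes this interval non-empty. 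Applying the basic inequality (Proposition \ref{basic_inequality}) to the concatenated RIS with scalars $b_i=a_\lambda c_i^{\mu_\lambda}/\|\tilde y_{\mu_\lambda}\|$ for $i\in F_{\mu_\lambda}$ produces $h\in\{0\}\cup W_0^{\mathrm{aux},\delta}$ and $g\in W^{\mathrm{aux},\delta}$ such that
\[
\bigl|f(\textstyle\sum_\lambda a_\lambda y_{\mu_\lambda})\bigr|\le C(1+o(1))\bigl|(h+g)(\textstyle\sum_\lambda \beta_\lambda z_{\mu_\lambda})\bigr|,
\]
where $\beta_\lambda=a_\lambda/(m_{J_\lambda}\|\tilde y_{\mu_\lambda}\|)$ and $z_{\mu_\lambda}=m_{J_\lambda}\sum_{i\in F_{\mu_\lambda}}c_i^{\mu_\lambda}e_{t_i}$ with $t_i=\max\supp(x_i)$. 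Each $z_{\mu_\lambda}$ is $m_{J_\lambda}$ times an $(n_{J_\lambda},2\varepsilon_{\mu_\lambda})$-basic scc, so the separation above together with (i) makes Proposition \ref{c0_type_estimate} apply, giving $|g(\sum\beta_\lambda z_{\mu_\lambda})|\le(1+4/\sqrt{w(g)})\max_\lambda|\beta_\lambda|$. Since $h$ is a single unit functional, $|h(\sum\beta_\lambda z_{\mu_\lambda})|\le(3\cdot 2^N)^{-1}\max_\lambda|\beta_\lambda|$. Finally, $m_{J_\lambda}\|\tilde y_{\mu_\lambda}\|\ge 0.99$ yields $|\beta_\lambda|\le 1.02|a_\lambda|$, and $w(g)\ge m_1=100$, so the constants combine to give $|f(\sum a_\lambda y_{\mu_\lambda})|\le 3\max_\lambda|a_\lambda|$.

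\textbf{Main obstacle.} The principal difficulty is the joint calibration of $\varepsilon_\lambda$, $\min F_\lambda$, and $\delta$. These parameters must depend on $|\lambda|$ in a uniform way that accommodates every $N\le|\lambda|$: the basic inequality requires $\delta\ge 2^{-\min\supp(x_{\min F})}$, while Proposition \ref{c0_type_estimate} demands $\delta<2^{-(N+1)}$ and $\varepsilon<(3m_{t_i}2^N)^{-1}$. Arranging that both hold simultaneously forces $\min\supp(x_{\min F_\lambda})>|\lambda|+1$, an inductive constraint that must be maintained as one builds the blocks $F_\lambda$ through the ordering $\phi(m_1),\phi(m_2),\ldots$. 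A secondary subtlety is that the functional decomposition $h+g$ returned by the basic inequality must align precisely with Proposition \ref{c0_type_estimate}: $g$ must land in $W^{\mathrm{aux},\delta}$ with the correct weight tuple $\vec w(g)=\vec w(f)$, and the contribution of the unit functional $h$, controlled by $m_{J_\lambda}\varepsilon_{\mu_\lambda}$, must be absorbed by the slack factor $2^{-N}$ that condition (i) provides.
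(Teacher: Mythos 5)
Your proposal is correct and follows essentially the same route as the paper: build a RIS via Corollary \ref{RIS exist}, split it into consecutive $S_{n_j}$-maximal blocks, use Proposition \ref{basic-scc-maximal} to produce weighted special convex combinations indexed by $\mathcal{D}$ via $\phi$, get the lower norm estimate from Theorem \ref{l1_spreading_model}, and for the upper bound chain Proposition \ref{basic_inequality} into Proposition \ref{c0_type_estimate}. If anything, you are a touch more explicit than the paper about the calibration of $\varepsilon_\lambda$ and $\min\supp(x_{\min F_\lambda})$ (your conditions (i) and (ii)) and about the doubling of $\varepsilon$ when passing from $e_{\min\supp(x_i)}$ to $e_{\max\supp(x_i)}$, details the paper leaves implicit.
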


\begin{proof}
Let $Y\subset X_{\mathcal{D}}$ and let $Z\subset Y$ be a block subspace. Then, fix $C> 1$ and let $(z_{i})$ be an infinite $(C,(j_{i})_{i\in\N})$-RIS in $Z$ with $1\leq\|z_{i}\|$ for each $i\in\N${, which exists by Corollary \ref{RIS exist}. By passing to a subsequence, we may assume that the conclusion of Theorem \ref{l1_spreading_model} is satisfied.} Recall that the nodes of $\mathcal{D}$ are in lexicographical order with respect to $\mathcal{M}$ and choose for each weight $m_{j}$ a number $\ep_{j}\in\left(0,\frac{1}{3m_{j}2^{N_{j}}}\right)$, where {$N_j = |\phi(m_j)|$.}

Now, let $F_{1}<F_{2}<\ldots$ be subsets of $\N$ so that $\{\min\supp(z_{i}) \mid i\in F_{j}\}$ is a maximal set in $S_{n_{j}}$ with respect to inclusion for each $j\in\N$. It follows from Proposition \ref{basic-scc-maximal} that there exist coefficients $(c^{j}_{i})_{i\in F_{j}}$ so that the vector $\sum_{i\in F_{j}}c^{j}_{i}e_{i}$ is an $(n_{j},\frac{\ep_{j}}{2})$-basic scc. In particular, we have that $x_{m_j}=\sum_{i\in F_{j}}c^{j}_{i}z_{i}$ is an $(n_{j},\frac{\ep_{j}}{2})$-scc and we note also that
\bes \|x_{m_j}\|=\left\Vert\sum_{i\in F_{j}}c^{j}_{i}z_{i}\right\Vert \geq \frac{m_{1}-1}{m_{1}m_{j}}\sum_{i\in F_{j}}c^{j}_{i}=\frac{m_{1}-1}{m_{1}m_{j}}\ees
from Theorem \ref{l1_spreading_model}. Let $\tilde{x}_{m_j}=m_{j}x_{j}$ and define, for $\lambda\in\mathcal{D}$, $y_{\lambda}= \tilde x_{\phi^{-1}(m_j)}/\|\tilde x_{\phi^{-1}(m_j)}\|$ so that $(y_{\lambda})$ is a normalized sequence in $Z$ that is block in the lexicographical order of $\mathcal{D}$.

Now, fix $N\in\N$ and, for $\lambda\in\{0,1\}^N$, let $\mu_\lambda\in\mathcal{D}$ such that $\mu_\lambda\geq \lambda$. Because $(y_\lambda)_{\lambda\in\mathcal{D}}$ is lexicographically block, there are $2^N\leq j_1<\cdots<j_{2^N}$ such that $\phi(\{m_{j_1},\ldots,m_{j_{2^N}}\}) = \{\mu_\lambda:\lambda\in\{0,1\}^N\}$. In particular, $\min\supp(y_{\mu_\lambda})\geq N$, $\lambda\in\{0,1\}^N$. Furthermore, for $\lambda\in\{0,1\}^N$, $\mu_\lambda\geq\lambda$, and thus, for $1\leq k\neq k'\leq 2^N$, $d(m_{j_k},m_{j_{k'}}) \geq 2^{-N}$. Then, it follows that
\bes \left\Vert \sum_{\lambda\in\{0,1\}^N}y_{\mu_\lambda}\right\Vert =\mathscr{F}\left(\sum_{\lambda\in\{0,1\}^N}y_{\mu_\lambda}\right) =\mathscr{F}\left(\sum_{k=0}^{2^{N}-1}\frac{1}{\|\tilde{x}_{m_{j_{k}}}\|}m_{j_{k}}\sum_{i\in F_{j_{k}}}c^{j_{k}}_{i}z_{i}\right) \qquad (\#) \ees
for some norming functional $\mathscr{F}\in W$ and, by Proposition \ref{basic_inequality}, this quantity is bounded above by
\bes C\left\vert (h+g)\left(\sum_{k=0}^{2^{N}-1}\frac{1}{\|\tilde{x}_{m_{j_{k}}}\|}m_{j_{k}}\sum_{i\in F_{j_{k}}}c^{j_{k}}_{i}e_{\rho(i)}\right)\right\vert \qquad (\#\#) \ees
where $h\in W^{\text{aux},\delta}_{0}\cup\{0\}$, $g\in W^{\text{aux},\delta}$ with $\vec w(g)=\vec w(\mathscr{F})$, and where $\rho(i)=\max\supp(z_{i})$ for each $i\in\N$. Note that $\sum_{i\in F_{j_{k}}}c^{j_{k}}_{i}e_{\rho(i)}$ is for each $0\leq k\leq 2^{N}-1$ an $(n_{j_{k}},\ep_{j_{k}})$-basic scc by the observation that immediately follows Proposition \ref{basic-scc-maximal}, and note also that
\[\frac{m_{j_{k}}}{\|\tilde{x}_{m_{j_{k}}}\|}=\frac{1}{\|x_{m_{j_{k}}}\|}\leq \frac{m_{1}m_{j_{k}}}{m_{1}-1}.\]
It therefore follows that $(\#\#)$ is bounded above by
\bes C\max_{0\leq k\leq 2^{N}-1}\frac{m_{j_{k}}\ep_{j_{k}}}{\|\tilde{x}_{j_{k}}\|} +C\left\vert g\left(\sum_{k=0}^{2^{N}-1}\frac{1}{\|\tilde{x}_{j_{k}}\|}m_{j_{k}}\sum_{i\in F_{j_{k}}}c^{j_{k}}_{i}e_{\rho(i)}\right)\right\vert \qquad (\#\#\#)  \ees
and lastly, by Proposition \ref{c0_type_estimate}, we note that $(\#\#\#)$ is bounded above by
\bes C\frac{m_{1}m_{j_{k}}}{m_{1}-1}\frac{1}{3m_{j_{k}} 2^{N_{j_{k}}}}+C\left(1+\frac{4}{\sqrt{w(g)}}\right)\max_{0\leq k\leq 2^{N}-1}\frac{1}{\|\tilde{x}_{j_{k}}\|} \leq C\frac{100}{99}\left(1+\left(1+\frac{2}{5}\right)\right)\leq 3, \ees
for $C>1$ sufficiently close to one. The estimate for arbitrary scalars coefficients follows from the 1-unconditionality of the basis of $X_\mathcal{D}$.
\end{proof}

\bibliographystyle{plain}
\bibliography{bibliography}

\end{document}